\newtheorem{Le}{Lemma}[section]
\newtheorem{Def}[Le]{Definition}
\newtheorem{Th}{Theorem}[section]
\newtheorem{Cor}[Le]{Corollary}
\newtheorem{Rem}[Le]{Remark}
\newtheorem{Conj}[Le]{Conjecture}
\newtheorem{Ex}[Le]{Example}
\numberwithin{equation}{section}
\newcommand{\R}{\mathbb{R}}
\newcommand{\N}{\mathbb{N}}
\newcommand{\Z}{\mathbb{Z}}
\newcommand{\eps}{\varepsilon}
\newcommand{\eq}[1]{\begin{equation}{#1}\end{equation}}
\newcommand{\mlt}[1]{\begin{multline}{#1}\end{multline}}
\newcommand{\alg}[1]{\begin{align}{#1}\end{align}}
\newcommand{\set}[2]{\{{#1}\mid{#2}\}}
\newcommand{\Lseqref}[1]{\stackrel{\scriptscriptstyle{\eqref{#1}}}{\lesssim}}
\newcommand{\Lsref}[1]{\stackrel{#1}{\lesssim}}
\DeclareMathOperator{\supp}{supp}
\newcommand{\E}{\mathbb{E}}
\newcommand{\Mp}{\mathcal{M}_p}
\newcommand{\D}{\mathcal{D}}
\title{Fractional integration of summable functions: Maz'ya's~$\Phi$-inequalities}
\author{Dmitriy Stolyarov\thanks{Supported by Russian Science Foundation grant N 19-71-10023}}
\begin{document}
\maketitle
\begin{abstract}
We study the inequalities of the type $|\int_{\R^d} \Phi(K*f)| \lesssim \|f\|_{L_1(\R^d)}^p$, where the kernel $K$ is homogeneous of order $\alpha - d$ and possibly vector-valued, the function $\Phi$ is positively $p$-homogeneous, and $p = d/(d-\alpha)$. Under mild regularity assumptions on $K$ and $\Phi$, we find necessary and sufficient conditions on these functions under which the inequality holds true  with a uniform constant for all sufficiently regular functions $f$. 
\end{abstract}
%\tableofcontents

\bigskip
\section{Hardy--Littlewood--Sobolev inequality and its modifications at the endpoint}
The classical Hardy--Littlewood--Sobolev inequality states that for any~$p,q$ such that~$1 < p < q < \infty$ and~$1/p - 1/q = \alpha/d$ there exists a constant~$C$ such that
\eq{\label{OriginalHLS}
\big\||\cdot|^{\alpha - d}*f\big\|_{L_q(\R^d)} \leq C\|f\|_{L_p(\R^d)}
}
for any~$f\in L_p(\R^d)$. The inequality was invented by Sobolev in~\cite{Sobolev1938} as an instrument to prove what is now called the Sobolev embedding theorem. The inequality fails when~$p=1$ as one may see by plugging an approximate identity that mimics a delta-measure for~$f$. 

In recent years there is a constant interest in corrections of the Hardy--Littlewood--Sobolev inequality at the endpoints. Such corrections may be obtained in different ways. A classical example follows from the Gagliardo--Nirenberg embedding~$\dot{W}_1^1(\R^d) \hookrightarrow L_{\frac{d}{d-1}}(\R^d)$,~$d > 1$:
\eq{\label{GN}
\big\||\cdot|^{1 - d}* f\big\|_{L_{\frac{d}{d-1}}(\R^d)} \lesssim \|f\|_{L_1(\R^d)},\quad f= \nabla g,
}
provided~$g$ is a smooth compactly supported function. The convolution is applied to a vector-valued function~$\nabla g$ coordinate-wise, and the way we measure the~$L_p$-norm of a vectorial function is not important. Here and in what follows the notation~$\lesssim$ signifies certain uniformity of the constant that is clear from the context.  The inequalities of this type, with some additional conditions on~$f$, are usually called Bourgain--Brezis inequalities. We refer the reader to the papers~\cite{BourgainBrezis2002, BourgainBrezis2004, BourgainBrezis2007, BousquetVanSchaftingen2014, HernandezSpector2020, KMS2015, LanzaniStein2005, Mazya2007, Mazya2010, Raita2018, Raita2019, SpectorVanSchaftingen2019, Spector2020, Stolyarov2020, Stolyarov2021, VanSchaftingen2004one, VanSchaftingen2004, VanSchaftingen2008, VanSchaftingen2010, VanSchaftingen2013} among many others and to the surveys~\cite{VanSchaftingen2014, Spector2019} for information on Bourgain--Brezis inequalities. The main heuristic principle beyond these inequalities is that a modification of the Hardy--Littlewood--Sobolev inequality for~$p=1$ is valid provided one cannot plug a delta-measure into it. In the example above,~$\nabla g$ cannot be a vectorial delta measure no matter how bad the distribution~$g$ is. 

In~\cite{Mazya2010}, Vladimir Maz'ya suggested a modification of the Hardy--Littlewood--Sobolev inequality that allows the substitution of a delta measure. Namely, he conjectured that the inequality
\eq{\label{MazjaInequality}
\Big|\int\limits_{\R^d}\Phi(\nabla f(x))\,dx\Big| \lesssim \|\Delta f\|_{L_1(\R^d)}^{\frac{d}{d-1}},\qquad f\in C_0^\infty(\R^d),
}
holds true whenever~$\Phi \colon \R^d \to \R$ is a locally Lipschitz positively~$\frac{d}{d-1}$-homogeneous function satisfying
\eq{\label{MazjaCancellation}
\int\limits_{S^{d-1}} \Phi(\zeta)\,d\sigma(\zeta) = 0.
}
In the latter condition,~$\sigma$ stands for the~$(d-1)$-dimensional Hausdorff measure on the unit sphere~$S^{d-1}$. By the positive~$p$-homogeneity of a function~$\Psi\colon \R^d \to \R$ we mean the validity of the identity~$\Psi(t x) = t^p \Psi(x)$ for any~$x\in \R^d$ and any~$t > 0$. The necessity of condition~\eqref{MazjaCancellation} for~\eqref{MazjaInequality} may be verified, as it usually happens in this field, by the example of~$f$ such that~$\Delta f$ mimics a delta measure. The conjecture was also listed as Problem~$5.1$ in~\cite{Mazya2018}. Some particular cases had been considered in~\cite{MazyaShaposhnikova2009}.

The main purpose of this paper is to prove Maz'ya's conjecture and provide a more systematic study of such type inequalities, which we will call Maz'ya's~$\Phi$-inequalities. The methods are classical and the paper is self-contained with a small exception for the Three Lattice Theorem, which we will use in Section~\ref{S3} below. The appearance of this theorem is not surprising since it is commonly used to transfer martingale inequalities to the Euclidean setting (e.g., see~\cite{Hytonen2014, HLP2013}). The probabilistic versions of Maz'ya's~$\Phi$-inequalities were obtained in~\cite{Stolyarov2021bis} (see~\cite{ASW2018} and~\cite{Stolyarov2019} for probabilistic versions of more classical Bourgain--Brezis inequalities), so we only need to transfer them to the Euclidean setting. Since the problem is non-linear, such a transference is non-trivial. In fact, we will not transfer the inequality, but rather re-prove it following the plot of the probabilistic proof and finding good representatives for its main characters. See Section~\ref{SD} for more explanation for the correspondence between the martingale and Euclidean settings.

I express my gratitude to Vladimir Maz'ya for bringing his conjecture to my attention and to Ilya Zlotnikov for reading the paper and improving the presentation.

\section{General Maz'ya's $\Phi$-inequalities}
Let~$d$ and~$\ell$ be natural numbers. Let~$\tilde{K}\colon S^{d-1}\to \R^\ell$ be a Lipschitz function. Let~$\alpha \in (0,d)$. Consider the kernel~$K\colon \R^d\to\R^\ell$ that is homogeneous of degree~$\alpha - d$
\eq{
K(x) = |x|^{\alpha - d}\tilde{K}(x/|x|),\qquad x\in \R^d\setminus \{0\}.
} 
The kernel~$K$ depends on~$\alpha$, however, we will suppress this dependence in our notation. Let~$p = d/ (d-\alpha)$ and let~$\Phi\colon \R^\ell \to \R$ be a positively $p$-homogeneous locally Lipschitz function that satisfies the cancellation conditions
\eq{\label{Cancellation}
\int\limits_{S^{d-1}} \Phi(\tilde{K}(\zeta))\,d\sigma(\zeta) = 0,\qquad \int\limits_{S^{d-1}} \Phi(-\tilde{K}(\zeta))\,d\sigma(\zeta) = 0.
}
Note that the two conditions are, in general, independent.
\begin{Th}\label{Main}
Let~$p\in(1,\infty)$. The inequality
\eq{\label{MI}
\Big|\int\limits_{\R^d} \Phi\big(K*f(x)\big)\,dx\Big| \lesssim \|f\|_{L_1(\R^d)}^p
}
holds true with a uniform constant for all functions~$f\in C_0^\infty(\R^d)$ with zero integral.
\end{Th}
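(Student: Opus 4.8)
The plan is to reduce the Euclidean inequality \eqref{MI} to its discrete/martingale counterpart, which (per the discussion in the introduction) is already available from \cite{Stolyarov2021bis}, using the Three Lattice Theorem as the bridge. First I would set up a dyadic framework: fix the standard dyadic lattice $\mathcal{D}$ on $\R^d$ together with its two shifted copies whose union has the "fatness" property from the Three Lattice Theorem, so that every ball is comparable to a cube from one of the three lattices. For $f\in C_0^\infty(\R^d)$ with $\int f = 0$, I would build a dyadic approximation of the potential $K*f$: since $K$ is homogeneous of degree $\alpha-d$ with Lipschitz angular part $\tilde K$, on a cube $Q$ the quantity $K*f$ should be close, up to controllable errors, to a sum over dyadic ancestors of $Q$ of terms of the form $|Q'|^{\alpha/d}\,\tilde K(\,\cdot\,)\cdot(\text{average of }f\text{ over }Q')$-type increments. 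The zero-integral hypothesis on $f$ is what makes this telescoping sum converge at the top scale (it kills the constant/nondecaying part), exactly mirroring the role of the martingale being started from mean zero.

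The heart of the argument is then to recognize the resulting dyadic object as (a Euclidean avatar of) a martingale transform to which the probabilistic Maz'ya $\Phi$-inequality applies. Concretely, I would associate to $f$ the martingale $g_n = \E[\,\cdot\mid \mathcal{F}_n]$ adapted to the dyadic filtration, whose increments $dg_n$ encode the local oscillation of $f$; the discrete potential is a fixed linear "transform" of these increments with coefficients built from $\tilde K$ evaluated at the relevant cube positions, and the $L_1$ norm $\|f\|_{L_1}$ controls the relevant martingale characteristic ($\sum \|dg_n\|_{1}$ or the analogous variation quantity). The cancellation conditions \eqref{Cancellation} on $\Phi\circ\tilde K$ and $\Phi\circ(-\tilde K)$ are precisely the hypotheses needed so that the averaged symbol of this transform satisfies the mean-zero requirement of the probabilistic theorem — the two separate conditions reflecting the two signs that a martingale increment can take. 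Invoking the probabilistic inequality yields $|\int \Phi(\text{discrete potential})| \lesssim \|f\|_{L_1}^p$.

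Finally I would pass from the discrete estimate back to \eqref{MI}. This requires two error analyses. First, replacing $K*f$ by its dyadic model on each cube: the difference is governed by the Lipschitz modulus of $\tilde K$ and by how much $f$ varies inside a cube, and since $\Phi$ is only locally Lipschitz and $p$-homogeneous one must control these errors in a scale-invariant, $L_1$-weighted fashion — here the homogeneity $p = d/(d-\alpha)$ is what makes all the bookkeeping dimensionally consistent. Second, summing the contributions of the three lattices and the boundary cubes where the comparison degrades. I expect the main obstacle to be exactly this transference step: because $\Phi$ is nonlinear, one cannot simply transplant the inequality, so the errors between the true potential and its martingale model must be estimated by hand and shown to be absorbable, uniformly over $f$. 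Controlling these nonlinear error terms — rather than proving the clean discrete inequality, which is quoted — is where the real work lies, and it is the reason the introduction stresses that the transference "is non-trivial" and that one re-proves rather than transfers the inequality. A secondary technical point is justifying the convergence of the telescoping potential at the coarsest scales, for which the zero-integral assumption on $f$ and the decay $\alpha - d < 0$ of $K$ are used together.
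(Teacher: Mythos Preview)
Your proposal has a genuine gap: you plan to quote the martingale inequality from \cite{Stolyarov2021bis} as a black box and then ``control the error'' between $K*f$ and a dyadic martingale model, but you never say how that error is controlled, and this is precisely the step that fails to be routine. The difficulty is structural: $\Phi$ is nonlinear and only positively $p$-homogeneous, so an error of size $\varepsilon$ in the argument produces an error in $\Phi$ of size $\sim |K*f|^{p-1}\varepsilon$, and bounding $\int |K*f|^{p-1}\varepsilon$ by $\|f\|_{L_1}^p$ is essentially the original Hardy--Littlewood--Sobolev estimate at the endpoint, which is false. There is no reason to expect the discrepancy between $K*f$ and any dyadic ``martingale transform'' of $f$ to be small in a norm that is itself controllable; on a single cube the discrepancy is typically of the same order as $K*f$ itself. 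Your own last paragraph concedes that this is ``where the real work lies'', but supplies no mechanism.

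The paper takes a different route and explicitly warns against yours: ``we will not transfer the inequality, but rather re-prove it following the plot of the probabilistic proof''. Concretely, it decomposes the kernel (not the function) into annular pieces $K_n$ and telescopes $\int\Phi(K_{\leq n+1}*f)-\int\Phi(K_{\leq n}*f)$. Each increment is split, via the auxiliary function $\Mp(x,y)=\min(x^{p-1}y,xy^{p-1})$, into a ``pure'' term $\int\Phi(K_{n+1}*f)$ and a cross term $\int\Mp(|K_{\leq n}*f|,|K_{n+1}*f|)$. The pure terms are handled by the cancellation \eqref{Cancellation}: since $\int\Phi(\lambda K_n(\cdot-y))=0$ for every $y,\lambda$, one gets the pointwise-to-$L_1$ reduction of Lemma~\ref{MedianLe}, and the sum over $n$ is bounded through a dyadic ``energy increment'' estimate (Lemma~\ref{EstimateByEnergyIncrement} and \eqref{Telescope}), with the Three Lattice Theorem used only to pass from the enlarged cubes $3^dQ_{n,j}$ back to a genuine dyadic tree. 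The cross terms are handled similarly. In short, the dyadic/martingale machinery enters only to sum explicit Euclidean quantities like $2^n\|f\|_{L_1(Q)}^{p-1}\inf_c\int_Q|x-c||f|$; it is never used to model $K*f$ itself. The ingredient your outline lacks is a replacement for $\Mp$ and for Lemma~\ref{MedianLe}: a device that turns the cancellation of $\Phi$ on the sphere into a quantitative gain at each scale without ever comparing $K*f$ to a martingale.
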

Note that the particular case~$\tilde{K}(\zeta) = \zeta$,~$\alpha = 1$, and~$p = \frac{d}{d-1}$ of the theorem implies Maz'ya's original conjecture since
\eq{\label{Lapl}
\nabla f (x) = c_d \int\limits_{\R^d}\frac{y}{|y|^d}\Delta f(x-y)\,dy
} 
and for the kernel~$\tilde{K}(\zeta) = \zeta$ the two conditions~\eqref{Cancellation} reduce to~\eqref{MazjaCancellation}. One may find versions of Theorem~\ref{Main} generated by other (possibly, vectorial) elliptic homogeneous operators similar as~\eqref{Lapl} is generated by the Laplacian. 

We will prove Theorem~\ref{Main} in the case~$p \leq 2$ and will provide hints to the proof in the (easier) case~$p > 2$ in Section~\ref{SD} below. The cancellation conditions~\eqref{Cancellation} are necessary for~\eqref{MI}, as may be seen by plugging~$f$ that mimics a delta measure. %The case~$p > 2$ is simpler. 
\begin{Rem}
The assumption~$f\in C_0^\infty$ in Theorem~\ref{Main} is redundant. In the classical linear translation invariant inequalities \textup(such as~\eqref{OriginalHLS} or~\eqref{GN}\textup) one may pass from the case of regular functions to the case of general measures by standard approximation\textup, once the constants in the inequality for regular functions are uniform. In our case\textup, some regularity assumption is needed\textup: if we plug~$f = \delta_0 - \delta_z$ for some~$z\ne 0$\textup, the integral on the left hand side is undefined as a Lebesgue integral\textup; thus\textup, we cannot formulate Theorem~\ref{Main} in the case when~$f$ is an arbitrary measure of bounded variation. Any reasonable regularization makes the left hand side vanish. The condition that~$f$ has zero integral is needed to avoid a similar effect at infinity. 
\end{Rem}
Before we pass to the plan of the proof, we give two interesting examples.
\begin{Ex}
Let~$d=1$\textup,~$\ell = 1$\textup,~$\alpha = \frac12$\textup, and~$p=2$. Let also the kernel~$\tilde{K} \colon \{-1,1\}\to \R$ be given by the rule
\eq{
K(\zeta) = \zeta,\quad \zeta = \pm 1.
}
In other words\textup, we are interested in the inequality
\eq{
\Big|\int\limits_{\R}\Phi\Big(\int\limits_{\R}f(x-y)\frac{y}{|y|^{\frac32}}\,dy\Big)\,dx\Big| \lesssim \|f\|_{L_1(\R)}^2
}
for sufficiently regular functions~$f$ with zero integral. The cancelation conditions~\eqref{Cancellation} are then reduced to
\eq{
\Phi(1) + \Phi(-1) = 0,
}
which\textup, together with the condition that~$\Phi$ is positively~$2$-homogeneous\textup, implies
\eq{
\Phi(t) = t|t|,\qquad t\in\R.
}
Theorem~\ref{Main} yields the inequality
\eq{
\Big|\int\limits_{\R}\Big(\int\limits_{\R}f(x-y)\frac{y}{|y|^{\frac32}}\,dy\Big)\Big|\int\limits_{\R}f(x-y)\frac{y}{|y|^{\frac32}}\,dy\Big|\,dx\Big| \lesssim \|f\|_{L_1(\R)}^2
}
Note that for the most classical kernel~$\tilde{K}(\zeta) = |\zeta|$\textup, Theorem~\ref{Main} does not provide any interesting information since~\eqref{Cancellation} implies~$\Phi = 0$.
\end{Ex}
\begin{Ex}
Let now~$d=2$\textup,~$\ell=2$\textup,~$\alpha= 1$\textup, and~$p=2$. Let~$\tilde{K}(\zeta) = \zeta$ and let~$\Phi$ be a quadratic function
\eq{
\Phi(x) = a_{11}x^2 + a_{12}x_1x_2 + a_{22}x^2,\qquad x\in \R^2.
}
Theorem~\ref{Main} states \textup(in view of~\eqref{Lapl}\textup)  the inequality
\eq{
\Big|\int\limits_{\R^2}\Big(a_{11}\Big|\frac{\partial f}{\partial x_1}\Big|^2 +a_{12}\frac{\partial f}{\partial x_1}\frac{\partial f}{\partial x_2}+ a_{22}\Big|\frac{\partial f}{\partial x_2}\Big|^2\Big)\Big| \lesssim \|\Delta f\|_{L_1}^2
}
holds true if and only if~$a_{11} + a_{22}=0$. This inequality had been obtained in~\textup{\cite{MazyaShaposhnikova2009}} by classical Fourier-analytic methods.
\end{Ex}
Now, we present the plan of the proof. Let us first split the kernel into similar parts:
\eq{\label{Kn}
K_n(x) = \begin{cases}
K(x),\qquad &2^{-n-1} \leq |x| \leq 2^{-n};\\
0,\qquad &\text{otherwise}.
\end{cases}
}
Then~$K(x) = \sum_{n\in \Z} K_n(x)$ and 
\eq{
K_n(x) = 2^{(d-\alpha)n} K_0(2^n x).
}
We will also use the notation
\eq{
K_{\leq n} = \sum\limits_{k \leq n} K_k.
}
It is important that the kernels~$K_n$ have disjoint (up to a set of measure zero) supports and satisfy the cancellation condition similar to~\eqref{Cancellation}. In particular,
\eq{\label{IndCanc}
\int\limits_{\R^d}\Phi(\lambda K_n(x-y))\,dx = 0,\quad \forall y\in \R^d, n\in \Z, \lambda \in \R.
}

Our first target is to split Theorem~\ref{Main} into several simpler statements. The proofs of these statements will be presented in the following sections. 

By dilation invariance of the problem, we may assume~$\supp f \subset B_{\frac12}(0)$ in the proof of Theorem~\ref{Main}. By~$B_r(x)$ we always mean the Euclidean ball with center~$x\in \R^d$ and radius~$r > 0$.
\begin{Le}\label{FirstLemma}
Let~$p\in(1,\infty)$. If~$\supp f \subset B_{\frac12}(0)$ and~$\int f = 0$\textup, then
\eq{
\int\limits_{\R^d} \Big| K_{\leq 0}*f(x)\Big|^p\,dx \lesssim \|f\|_{L_1(\R^d)}^p,\qquad f\in L_1(\R^d).
}
\end{Le}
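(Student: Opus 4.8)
\medskip
\noindent\emph{Sketch of the intended argument.}
The plan is to use that $K_{\le 0}$ is a bounded function which, away from the origin, coincides with the homogeneous kernel $K$ and therefore decays. First I would note that the supports of the pieces $K_k$ with $k\le 0$ fill, up to spheres of measure zero, the exterior region $\{|x|\ge\frac12\}$; hence $K_{\le 0}(x)=K(x)$ for $|x|\ge\frac12$ and $K_{\le 0}(x)=0$ for $|x|<\frac12$, and, since $\alpha<d$, one gets the uniform bound $\|K_{\le 0}\|_{L_\infty(\R^d)}\le 2^{d-\alpha}\|\tilde K\|_{L_\infty(S^{d-1})}$.

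Then I would split $\R^d=\{|x|\le 2\}\cup\{|x|>2\}$ and treat the two pieces of $\int|K_{\le 0}*f|^p$ separately. On the ball $\{|x|\le 2\}$ no cancellation is needed: from $|K_{\le 0}*f(x)|\le\|K_{\le 0}\|_{L_\infty}\|f\|_{L_1}$,
\[
\int\limits_{|x|\le 2}\big|K_{\le 0}*f(x)\big|^p\,dx\le|B_2(0)|\,\|K_{\le 0}\|_{L_\infty}^p\,\|f\|_{L_1(\R^d)}^p .
\]
On the complement I would use both hypotheses. Since $\supp f\subset B_{1/2}(0)$, any $y$ with $f(y)\neq 0$ has $|x-y|\ge\frac32$, so $K_{\le 0}$ equals $K$ both at $x$ and at $x-y$; subtracting $K(x)\int f=0$,
\[
K_{\le 0}*f(x)=\int\limits_{\R^d}\big(K(x-y)-K(x)\big)f(y)\,dy ,\qquad |x|>2 .
\]
The kernel $K$ is locally Lipschitz on $\R^d\setminus\{0\}$ (a product and composition of $|x|^{\alpha-d}$, $x\mapsto x/|x|$, and the Lipschitz function $\tilde K$), with almost-everywhere gradient $|DK(z)|\lesssim|z|^{\alpha-d-1}$ by homogeneity. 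For $|x|>2$ and $|y|\le\frac12$ the segment $[x-y,x]$ avoids the origin and stays in $\{|z|\ge\frac12|x|\}$, so the fundamental theorem of calculus yields
\[
|K(x-y)-K(x)|\le|y|\int\limits_0^1|DK(x-ty)|\,dt\lesssim|y|\,|x|^{\alpha-d-1},
\]
and therefore, using $\int|y|\,|f(y)|\,dy\le\frac12\|f\|_{L_1}$, one gets $|K_{\le 0}*f(x)|\lesssim|x|^{\alpha-d-1}\|f\|_{L_1}$. Raising to the $p$-th power and integrating in polar coordinates,
\[
\int\limits_{|x|>2}\big|K_{\le 0}*f(x)\big|^p\,dx\lesssim\|f\|_{L_1}^p\int\limits_2^\infty r^{(\alpha-d-1)p+d-1}\,dr ,
\]
which is finite because $(\alpha-d-1)p<-d$; for the exponent $p=\frac{d}{d-\alpha}$ at hand one has precisely $(\alpha-d-1)p=-(d+p)$. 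Adding the two bounds proves the lemma.

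I do not expect a genuine obstacle here: this is the ``easy'' lemma, disposing of the tail of the kernel, and everything reduces to routine size estimates. The one point deserving a little care is the passage from the Lipschitz regularity of $\tilde K$ to the gradient bound $|DK(z)|\lesssim|z|^{\alpha-d-1}$ and the associated mean value estimate — I would justify it via the almost-everywhere gradient and absolute continuity along segments (or by mollifying $\tilde K$ and using the uniform bound on the mollified gradients) rather than by quoting the classical mean value theorem, which is unavailable since $\tilde K$ need not be $C^1$. It is also instructive to see where $\int f=0$ enters: without it one only controls the tail $|x|^{\alpha-d}$, which at $p=\frac{d}{d-\alpha}$ falls logarithmically short of being $p$-summable, and the single vanishing moment is exactly what upgrades the decay by one power of $|x|$.
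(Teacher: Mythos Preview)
Your proof is correct and follows essentially the same route as the paper: split into $\{|x|\le 2\}$ and $\{|x|>2\}$, use boundedness of $K_{\le 0}$ on the first region, and on the second use $\int f=0$ together with a Lipschitz-type bound $|K(x-y)-K(x)|\lesssim |y|\,|x|^{\alpha-d-1}$ for $|x|\ge 2$, $|y|\le\frac12$. The paper isolates this last estimate as a separate lemma (Lemma~\ref{K1}) and proves it by the scaling identity $K(x-y)-K(x)=|x|^{\alpha-d}\bigl(K(x/|x|-y/|x|)-K(x/|x|)\bigr)$ together with local Lipschitz continuity of $K$ near the unit sphere, whereas you obtain the same bound via an almost-everywhere gradient estimate and integration along segments; the two arguments are interchangeable, and your remark about handling the Lipschitz (rather than $C^1$) regularity of $\tilde K$ is a fair point but not a substantive departure.
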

Unless otherwise stated, the absolute value of a vector denotes its Euclidean norm. 
\begin{Def}
Let~$p \in (1,2]$. Define the function~$\Mp\colon \R_+\times \R_+ \to \R$ by the rule
\eq{
\Mp(x,y) = \min(x^{p-1} y, x y^{p-1}).
}
\end{Def}
The function~$\Mp$ is pivotal for our considerations. It was suggested by the martingale analogs of Maz'ya's conjecture in~\cite{Stolyarov2021bis}. The most important feature that distinguishes if from more natural positively~$p$-homogeneous functions~$(y,z)\mapsto yz^{p-1}$ and~$(y,z)\mapsto y^{p-1}z$ is the local Lipschitz property (see Lemma~\ref{MpLip} below). In the case~$p=2$ the function~$\Mp$ simplifies to~$(y,z)\mapsto yz$.
\begin{Le}\label{SecondLemma}
Let~$p \leq 2$. For any~$n \geq 0$ and any~$f\in C_0^\infty(\R^d)$\textup, we have
\mlt{\label{SecondLemmaIneq}
\Big|\int\limits_{\R^d}\Big(\Phi\big(K_{\leq n+1}*f\big) - \Phi\big(K_{\leq n}*f\big)\Big)\Big|\\ \lesssim \Big|\int\limits_{\R^d}\Phi\big(K_{n+1}*f\big)\Big| + \int\limits_{\R^d}\Mp\Big(\big|K_{\leq n}*f(x)\big|, \big|K_{n+1}*f(x)\big|\Big)\,dx.
}
The constant in this inequality depends neither on~$f$ nor on~$n$.
\end{Le}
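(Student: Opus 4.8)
The plan is to reduce \eqref{SecondLemmaIneq} to a pointwise, purely algebraic inequality for~$\Phi$ and then integrate. Write $g = K_{\leq n}*f$ and $h = K_{n+1}*f$, so that $K_{\leq n+1}*f = g+h$. The kernel $K_{n+1}$ and the function $f$ are compactly supported, hence so is $h$, and therefore so is $\Phi(g+h)-\Phi(g)$; thus all integrals in \eqref{SecondLemmaIneq} are over a bounded set and absolutely convergent (recall $K_{\leq n}\in L_\infty(\R^d)$, so $g$ is bounded, and $\Phi$ is continuous), which also explains why no hypothesis on $\int f$ is needed here. By the triangle inequality,
\[
\Big|\int_{\R^d}\big(\Phi(g+h)-\Phi(g)\big)\Big| \le \Big|\int_{\R^d}\Phi(h)\Big| + \int_{\R^d}\big|\Phi(g+h)-\Phi(g)-\Phi(h)\big|,
\]
and the first summand is exactly the first term on the right of \eqref{SecondLemmaIneq}. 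It therefore suffices to prove the pointwise estimate
\[
\big|\Phi(a+b) - \Phi(a) - \Phi(b)\big| \lesssim \Mp(|a|,|b|),\qquad a,b\in\R^\ell,
\]
with a constant depending only on $p,\ell,\Phi$, and then apply it with $a=g(x)$, $b=h(x)$ and integrate in $x$.

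For the pointwise estimate I would use two soft consequences of the positive $p$-homogeneity and local Lipschitz property of $\Phi$. First, $\Phi(0)=0$ and $\Phi$ is bounded on $S^{\ell-1}$, so $|\Phi(x)|\le M|x|^p$ for all $x$. Second, $\Phi$ is Lipschitz on the closed unit ball, and rescaling both arguments by $R=\max(|x|,|y|)$ yields the scaled estimate $|\Phi(x)-\Phi(y)|\le L\big(\max(|x|,|y|)\big)^{p-1}|x-y|$. Hence
\[
\big|\Phi(a+b)-\Phi(a)-\Phi(b)\big|\le\big|\Phi(a+b)-\Phi(a)\big|+|\Phi(b)|\le L(|a|+|b|)^{p-1}|b|+M|b|^{p}\le C(|a|+|b|)^{p-1}|b|,
\]
and, by the symmetry of the left-hand side in $a,b$, the same quantity is $\le C(|a|+|b|)^{p-1}|a|$ as well, whence it is $\le C(|a|+|b|)^{p-1}\min(|a|,|b|)$. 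Finally, assume $|a|\ge|b|$; then $\Mp(|a|,|b|)=|a|^{p-1}|b|$, because for $p\le2$ one has $x^{p-1}y\le xy^{p-1}$ whenever $x\ge y$, and $(|a|+|b|)^{p-1}\min(|a|,|b|)=(|a|+|b|)^{p-1}|b|\le(2|a|)^{p-1}|b|=2^{p-1}\Mp(|a|,|b|)$. This closes the pointwise bound, and integrating it against $dx$ completes the proof.

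I do not anticipate a genuine obstacle: the lemma is a formal consequence of the structure of $\Phi$ and the algebra of the weight $\Mp$, and even \eqref{IndCanc} is not used. The one step that truly requires care — and the reason the statement is restricted to $p\le2$ — is the final comparison $(|a|+|b|)^{p-1}\min(|a|,|b|)\lesssim\Mp(|a|,|b|)$: for $p>2$ one has $\Mp(|a|,|b|)=|a|\,|b|^{p-1}$ when $|a|\ge|b|$, which is far too small for any pointwise bound, and there one must instead exploit the cancellation \eqref{IndCanc} of $K_{n+1}$ together with the near-constancy of $g=K_{\leq n}*f$ at scale $2^{-n}$; this is the route indicated for $p>2$ in Section~\ref{SD}. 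The only other point to attend to is the elementary verification, sketched above, that the scaled Lipschitz estimate and the growth bound $|\Phi(x)|\le M|x|^p$ really do follow from the stated regularity of $\Phi$.
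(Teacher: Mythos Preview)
Your proof is correct and follows essentially the same route as the paper's: both arguments rest on the local Lipschitz property and positive $p$-homogeneity of~$\Phi$ (the paper packages this as Lemma~\ref{Phi1}) to bound $\Phi(a+b)-\Phi(a)-\Phi(b)$ by~$\Mp(|a|,|b|)$ when~$p\le 2$. The only organizational difference is that the paper splits the domain into $\{|g|\ge|h|\}$ and its complement and handles the pieces separately before recombining, whereas you subtract $\Phi(h)$ at the outset and exploit the symmetry of $\Phi(a+b)-\Phi(a)-\Phi(b)$ to get the pointwise bound in one stroke --- a slightly cleaner packaging of the same estimate.
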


If we treat Theorem~\ref{Main} as a substitution for the~$L_1\to L_p$ continuity of the Riesz potential, the next theorem might be thought of as a substitution for the~$L_1 \to B_{p}^{0,1}$ continuity, where~$B_p^{0,1}$ is the Besov space (see~\cite{Stolyarov2020} for a similar reduction for more classical Bourgain--Brezis inequalities).
\begin{Th}\label{Main2}
For any~$f\in C_0^\infty(\R^d)$\textup,
\eq{
\sum\limits_{n \geq 1}\Big|\int\limits_{\R^d} \Phi(K_n*f)\Big| \lesssim \|f\|_{L_1}^p.
}
The constant in this inequality depends neither on~$f$ nor on~$n$.
\end{Th}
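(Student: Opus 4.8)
The plan is to prove Theorem~\ref{Main2} by a Calder\'on--Zygmund decomposition of $f$ at a sequence of dyadic scales, reducing the single function $f$ to a superposition of pieces each of which behaves like a rescaled atom, and then exploiting the cancellation~\eqref{IndCanc} together with the $p$-homogeneity of $\Phi$ to estimate $\int \Phi(K_n*f)$ for each $n$. Concretely, fix $n\ge 1$. Since $K_n$ is supported in the annulus $2^{-n-1}\le|x|\le 2^{-n}$, the quantity $K_n*f(x)$ only sees $f$ on a ball of radius $\sim 2^{-n}$ around $x$; this locality is what will let us localize the integral $\int\Phi(K_n*f)$ to a union of cubes at scale $2^{-n}$ and estimate the contribution of each cube by the mass of $f$ on a slight enlargement of it.

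First I would record the key pointwise bound: by Lipschitz-ness of $\tilde K$ and homogeneity, $\|K_n*g\|_{L_\infty}\lesssim 2^{(d-\alpha)n}\cdot 2^{dn}\|g\|_{L_1(\text{supp near }x)}$ more precisely $|K_n*f(x)|\lesssim 2^{dn}\cdot 2^{-\alpha n}\,\mathrm{(local }L_1\text{ mass)}$, so that, crudely, $|K_n*f(x)|\lesssim 2^{dn/p}\|f\|_{L_1}^{?}$ is too lossy; instead one needs the finer, cube-localized version. Let $\{Q\}$ be the dyadic cubes of side $2^{-n}$. For each such $Q$, let $3Q$ be its threefold dilate; then $K_n*f$ restricted to $Q$ depends only on $f|_{3Q}$. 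Writing $a_Q=\int_{3Q}|f|$, I claim
\[
\Big|\int_{Q}\Phi(K_n*f)\Big|\lesssim \Big|\int_{Q}\Phi\big(K_n*(f|_{3Q})\big)\Big|+\text{(error)},
\]
and for the main term one wants to insert the cancellation~\eqref{IndCanc}. The cleanest route: subtract from $f|_{3Q}$ its average $c_Q=a_Q'/|3Q|$ over $3Q$ (where $a_Q'=\int_{3Q}f$), use~\eqref{IndCanc} applied to the constant part to kill $\int_{Q}\Phi(\lambda K_n(\cdot-y))$ when $f$ is a point mass, and then estimate $\int_{Q}\Phi(K_n*(f|_{3Q}))$ by Lipschitz continuity of $\Phi$ on the scale $\|K_n*(f|_{3Q})\|_\infty\sim 2^{(d-\alpha)n}a_Q\cdot 2^{?}$; since $\Phi$ is $p$-homogeneous and locally Lipschitz, $|\Phi(v)-\Phi(w)|\lesssim (|v|+|w|)^{p-1}|v-w|$. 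The upshot should be a bound of the form $\int_{Q}|\Phi(K_n*f)|\lesssim (2^{\alpha n}a_Q)^{p}\cdot 2^{-dn}$ after keeping track of the $2^{dn}$ from the $L_1\to L_\infty$ normalization of $K_n$ and the $|Q|=2^{-dn}$ from the integration; crucially, because $\int f=0$ is \emph{not} needed locally, here one uses instead that summing $a_Q$ over $Q$ at a fixed scale gives $\sum_Q a_Q\lesssim\|f\|_{L_1}$, and $\sum_Q a_Q^{p}\le(\sum_Q a_Q)^{p}=\|f\|_{L_1}^{p}$ since $p\ge 1$ and the terms are nonnegative. Then summing over $n\ge 1$ requires a further gain in $n$: one expects $\int|\Phi(K_n*f)|\lesssim 2^{-\delta n}\|f\|_{L_1}^p$ for some $\delta>0$ coming from the mismatch between the natural scaling exponent and the critical one, \emph{provided} one genuinely exploits that $\mathrm{supp}\,f$ lies in a ball of radius $\tfrac12\gg 2^{-n}$ and that the cancellation~\eqref{IndCanc} removes the ``diagonal'' part so only the oscillation of $f$ at scale $2^{-n}$ contributes.

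This last point is where the real mechanism lives, and I anticipate it is also the main obstacle: a naive termwise bound gives $\sum_n\int|\Phi(K_n*f)|$ with \emph{no} decay in $n$, because the critical exponent $p=d/(d-\alpha)$ makes the scaling exactly balanced — this is precisely the endpoint phenomenon. The resolution must use that $\Phi\circ\tilde K$ has mean zero on the sphere (both conditions in~\eqref{Cancellation}), so that against the slowly varying part of $f$ the integral $\int_Q\Phi(K_n*f)$ is not merely bounded but small, with the smallness quantified by the modulus of continuity of $f$ (if $f$ is $C_0^\infty$, then on a cube of side $2^{-n}$ it varies by $O(2^{-n}\|\nabla f\|_\infty)$, yielding decay $2^{-n}$ times derivative norms). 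Since Theorem~\ref{Main2} should hold with a constant independent of $f$ and only $\|f\|_{L_1}$ on the right, one cannot actually use $\|\nabla f\|_\infty$; instead the decay in $n$ has to be harvested from a more robust source — most plausibly from a frequency/annulus orthogonality: $K_n*f$ and $K_m*f$ live at essentially separated frequencies $|\xi|\sim 2^n$, and since $\Phi$ is $p$-homogeneous one can hope to run an almost-orthogonality argument in the spirit of Littlewood--Paley after linearizing $\Phi$ along each cube. I would therefore structure the proof so that the cube-localized estimate above is combined with a telescoping/almost-orthogonality summation over $n$, leaning on Lemma~\ref{FirstLemma} (for the low-frequency tail $K_{\le 0}$) and on the disjoint-support cancellation~\eqref{IndCanc} to make the $n$-sum converge; the technical heart will be proving the single-scale estimate with the correct power of $a_Q$ and a genuine $2^{-\delta n}$ gain, and I expect that gain to come from Hölder's inequality in the $n$-variable after establishing both an $\ell^\infty$-in-$n$ bound and a summable bound in a weaker norm.

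Finally, once the single-scale bounds $\int_{\R^d}|\Phi(K_n*f)|\lesssim \varepsilon_n\|f\|_{L_1}^p$ with $\sum_{n\ge1}\varepsilon_n<\infty$ are in hand, Theorem~\ref{Main2} follows immediately by summing, and it then feeds into Theorem~\ref{Main} via Lemma~\ref{SecondLemma}: telescoping $\Phi(K_{\le n+1}*f)-\Phi(K_{\le n}*f)$ from $n=0$ upward, using Lemma~\ref{SecondLemma} to dominate each increment by $|\int\Phi(K_{n+1}*f)|$ (summable by Theorem~\ref{Main2}) plus the $\Mp$-term (which must be separately controlled, presumably by another annulus-by-annulus estimate exploiting the local Lipschitz property of $\Mp$ noted after its definition), together with Lemma~\ref{FirstLemma} for the base term $\int|\Phi(K_{\le0}*f)|\le\|K_{\le0}*f\|_{L_p}^p\cdot\|\Phi\|_{\Lip}\lesssim\|f\|_{L_1}^p$. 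So the proof of Theorem~\ref{Main2} itself is the decisive step, and within it the extraction of $n$-decay at the critical exponent is the crux.
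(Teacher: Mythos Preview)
Your localization to scale-$2^{-n}$ cubes and the subtraction of a constant using~\eqref{IndCanc} are both on the right track, and your crude single-scale bound $\sum_Q a_Q^p\le\|f\|_{L_1}^p$ is correct. But the proposal has a genuine gap at exactly the point you flag as ``the crux'': you never identify a mechanism that makes the sum over $n$ converge. Your two suggested mechanisms both fail. A uniform bound $\big|\int\Phi(K_n*f)\big|\lesssim 2^{-\delta n}\|f\|_{L_1}^p$ is simply false at the critical exponent (take $f$ an $L_1$-normalized bump at scale $2^{-n}$), and the ``frequency almost-orthogonality / Littlewood--Paley'' idea is a red herring: $\Phi$ is nonlinear and not a quadratic form, so there is no Plancherel-type identity to exploit, and $K_n*f$ is not even a Littlewood--Paley piece of $f$ in any useful sense.

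What the paper actually does is different in kind. The cancellation~\eqref{IndCanc} is used not merely to subtract the average of $f$ on $3Q$ but to produce, after a Lipschitz estimate for $\Phi$ and a $\int|K_0(\cdot-z)-K_0(\cdot-y)|\lesssim|z-y|$ bound, the single-scale inequality
\[
\Big|\int_{\R^d}\Phi(K_{n+1}*f)\Big|\ \lesssim\ 2^{n}\sum_{j}\|f\|_{L_1(3^dQ_{n,j})}^{p-1}\,\inf_{c}\int_{3^dQ_{n,j}}|x-c|\,|f(x)|\,dx,
\]
i.e.\ a bound by a \emph{first moment} of $|f|$ at scale $2^{-n}$, not by $a_Q^p$. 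The summability in $n$ then comes from a purely combinatorial/martingale fact: this first-moment quantity on a cube $Q$ is dominated by a geometric average of the \emph{energy increments} $E_{Q,k}[f]-E_{Q,k+1}[f]$ (where $E_{Q,k}[f]=\sum_{Q'\in\D_k(Q)}(\int_{Q'}|f|)^p$), and these increments telescope to $\|f\|_{L_1}^p$ when summed over all scales. A Three Lattice argument handles the fact that the $3^dQ_{n,j}$ are not dyadic. In short, the decay in $n$ is not spectral but potential-theoretic: each scale consumes a nonnegative portion of the fixed budget $\|f\|_{L_1}^p$, and the portions add up. Your outline is missing both the first-moment single-scale estimate and the telescoping device that replaces the nonexistent $2^{-\delta n}$ gain.
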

\begin{Th}\label{Remainder}
For any~$f\in C_0^\infty(\R^d)$\textup,
\eq{
\sum\limits_{n \geq 0} \int_{\R^d}\Mp\Big(\big|K_{\leq n}*f(x)\big|, \big|K_{n+1}*f(x)\big|\Big)\,dx \lesssim \|f\|_{L_1}^p.
}
The constant in this inequality depends neither on~$f$ nor on~$n$.
\end{Th}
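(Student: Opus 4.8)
The plan is to follow the plot of the probabilistic proof from~\cite{Stolyarov2021bis}, realised directly in the Euclidean setting; the passage from Euclidean balls to dyadic cubes is precisely what the Three Lattice Theorem is needed for.

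\textbf{Reduction and pointwise bounds.} By dilation invariance we may assume $\supp f\subset B_{1/2}(0)$: replacing $f$ by $x\mapsto 2^{kd}f(2^kx)$, whose support shrinks into $B_{1/2}(0)$ once $k$ is large enough, keeps $\|f\|_{L_1}$ fixed and does not decrease the left-hand side, so it suffices to treat this case. Write $g_n=K_{\leq n}*f$ and $d_{n+1}=K_{n+1}*f=g_{n+1}-g_n$. Since the $K_k$ are the restrictions of $K$ to the dyadic annuli $\{2^{-k-1}\leq|x|\leq 2^{-k}\}$, we have $K_{\leq n}(x)=K(x)\mathbf 1_{\{|x|\geq 2^{-n-1}\}}$; thus $g_n$ is the potential $K*f$ truncated at scale $2^{-n-1}$, while $\supp d_{n+1}\subset B_1(0)$ for every $n\geq 0$, so the whole left-hand side lives over the fixed ball $B_1(0)$. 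Using only $|\tilde K|\leq\|\tilde K\|_{\Lip}$ one gets, for $n\geq 0$,
\[
|d_{n+1}(x)|\lesssim 2^{n(d-\alpha)}\int\limits_{|x-y|\leq 2^{-n-1}}|f(y)|\,dy,
\]
and, summing over the annuli, the uniform bounds $\sup_{n\geq 0}|g_n(x)|\leq C\,G(x)$ and $\sum_{n\geq 0}|d_{n+1}(x)|\leq C\,G(x)$, where $G:=|\cdot|^{\alpha-d}*|f|$; the endpoint Hardy--Littlewood--Sobolev estimate gives $\|G\|_{L_{p,\infty}(\R^d)}\lesssim\|f\|_{L_1}$. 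Combined with $\Mp(a,b)\leq a^{p-1}b$, these already yield the \emph{pointwise} inequality $\sum_{n\geq 0}\Mp(|g_n(x)|,|d_{n+1}(x)|)\lesssim G(x)^p$ — but this is not enough, because $G$ lies only in weak $L_p$, and $G^p$ need not be integrable even over the bounded set $B_1(0)$ (take $f$ mimicking $\delta_0$). The whole content of the theorem is the extra decay carried by the minimum defining $\Mp$.

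\textbf{Reduction to a dyadic martingale.} By the Three Lattice Theorem there are finitely many dyadic systems $\mathcal D^{(1)},\dots,\mathcal D^{(M)}$ such that every ball $B_{2^{-n-1}}(x)$ is contained in some $Q\in\mathcal D^{(j)}$ with $\ell(Q)\asymp 2^{-n}$. Using the Lipschitz regularity of $\tilde K$ and the annular cancellation~\eqref{IndCanc}, one estimates $g_n(x)$ and $d_{n+1}(x)$ in terms of the dyadic averages $\langle f\rangle_Q$ over cubes of side comparable to the relevant scale, drawn from these lattices — up to error terms that are summable by a direct geometric-series argument in the small-scale gains $2^{-n\alpha}$ (as in the proof of Lemma~\ref{FirstLemma}). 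Organising the averages lattice by lattice produces, for each $j$, a dyadic martingale $\varphi^{(j)}$ adapted to $\mathcal D^{(j)}$, built from the averages of $f$ and vanishing at scale $-\infty$, whose maximal function obeys the dyadic endpoint bound $\big\|\sup_n|\varphi^{(j)}_n|\big\|_{L_{p,\infty}}\lesssim\|f\|_{L_1}$, the martingale differences $\Delta_Q\varphi^{(j)}$ taking the place of the $d_{n+1}$. Since $\Mp$ is positively $p$-homogeneous, nondecreasing in each variable, and subadditive up to the constant $M$ (one checks $\Mp(\sum_i a_i,\sum_i b_i)\leq M\sum_{i,i'}\Mp(a_i,b_{i'})$), the left-hand side of the theorem is dominated by a finite combination of the dyadic sums $\sum_n\sum_{\ell(Q)=2^{-n}}|Q|\,\Mp\big(|\langle\varphi^{(j)}\rangle_Q|,|\Delta_Q\varphi^{(j)}|\big)$ plus the tame errors.

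\textbf{The dyadic estimate, and the main obstacle.} It remains to prove, for a dyadic martingale $\varphi$ starting from $0$ with $\big\|\sup_n|\varphi_n|\big\|_{L_{p,\infty}}\lesssim\|f\|_{L_1}$, the inequality $\sum_n\sum_{\ell(Q)=2^{-n}}|Q|\,\Mp(|\langle\varphi\rangle_Q|,|\Delta_Q\varphi|)\lesssim\|f\|_{L_1}^p$. Here the precise shape of $\Mp$, and in particular its local Lipschitz property (Lemma~\ref{MpLip}), is essential: following~\cite{Stolyarov2021bis} one builds a Bellman function $\Bell$ on $\R^\ell\times\R_+$ — nonnegative, of controlled size ($\Bell(0,t)\lesssim t$), and concave along the admissible directions — obeying a main inequality whose surplus term is $c\,\Mp$,
\[
\Bell(x,t)\geq\big\langle\Bell\big(x+\Delta,\,t+c|\Delta|^p\big)\big\rangle+c\,\Mp(|x|,|\Delta|),
\]
runs it along $\varphi$ with the second variable bookkeeping the accumulated weak-$L_p$ ``budget'', and sums the telescoping increments. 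Equivalently, one can bypass the Bellman function by an iterated Calder\'on--Zygmund (good-$\lambda$) decomposition over a dyadic sequence of heights, the minimum in $\Mp$ being exactly the device that turns the weak-$L_p$ information into a \emph{summable} $L_1$ gain on each level set. The heart of the matter — and the main obstacle — is precisely the weak-$L_p$ versus $L_p$ gap flagged in the first step: no soft application of H\"older's inequality closes the estimate, since $K*f$ and $\sum_n|d_{n+1}|$ are, in general, only in $L_{p,\infty}$, so one is forced to exploit that $\Mp$ is the \emph{minimum} of the two ``naive'' $p$-homogeneous products rather than either of them — which is also why $\Mp$, and not those products, is the quantity that appeared in Lemma~\ref{SecondLemma}.
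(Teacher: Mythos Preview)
Your proposal is a programme, not a proof, and the central step is a gap. The sentence ``one estimates $g_n(x)$ and $d_{n+1}(x)$ in terms of the dyadic averages $\langle f\rangle_Q$ over cubes of side comparable to the relevant scale \dots\ up to error terms that are summable by a direct geometric-series argument'' is precisely the content of the theorem, and it is not justified. The obstruction is structural: $g_n=K_{\leq n}*f$ is an integral of $f$ over the whole region $\{|x-y|\geq 2^{-n-1}\}$, so it is \emph{not} close to any dyadic average at scale $2^{-n}$, and the sequence $(g_n)$ is not a martingale for any filtration. Likewise $d_{n+1}=K_{n+1}*f$ lives on the annulus $\{2^{-n-2}\leq |x-y|\leq 2^{-n-1}\}$ and is not comparable to a dyadic difference $\langle f\rangle_{Q'}-\langle f\rangle_Q$. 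The annular cancellation~\eqref{IndCanc} you invoke is a cancellation for $\Phi\circ K_n$, not for $K_n$ itself, so it gives no pointwise control on $g_n$ or $d_{n+1}$; recall the paper imposes \emph{no} cancellation on $\tilde K$. Finally, even granting a martingale $\varphi$ with $\|\sup_n|\varphi_n|\|_{L_{p,\infty}}\lesssim\|f\|_{L_1}$, you do not prove the dyadic inequality $\sum_Q|Q|\,\Mp(|\langle\varphi\rangle_Q|,|\Delta_Q\varphi|)\lesssim\|f\|_{L_1}^p$ --- you defer it to a Bellman function or a good-$\lambda$ argument from~\cite{Stolyarov2021bis}, but the hypotheses there are on an $L_1$-normalised martingale of a specific type, not an arbitrary weak-$L_p$ martingale, so the citation does not close the argument either.

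The paper does \emph{not} transfer to a martingale and then quote the discrete result; it re-proves the estimate directly in the Euclidean setting. The key move you are missing is to separate scales in the first argument of $\Mp$: use subadditivity (Lemma~\ref{Subadditive}) to write $\Mp(|K_{\leq n}*f|,|K_{n+1}*f|)\lesssim\Mp(|K_{n-N\leq n}*f|,|K_{n+1}*f|)+\sum_{m<n-N}\Mp(|K_m*f|,|K_{n+1}*f|)$ for a fixed large $N$. The close term is handled exactly like Theorem~\ref{LocalMain2}: the Lipschitz property of $\Mp$ and the \emph{disjointness of the supports} of $K_{n-N\leq n}$ and $K_{n+1}$ (so that $\Mp(|K_{n-N\leq n}(x-c)|,|K_{n+1}(x-c)|)\equiv 0$) reduce to the local quantity $2^n\|f\|_{L_1(3^dQ_{n,j})}^{p-1}\inf_c\int_{3^dQ_{n,j}}|x-c||f|$, which then sums via Lemma~\ref{EstimateByEnergyIncrement} and the Three Lattice Theorem. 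The separated terms exploit that, for $x\in Q_{m+N,j}$, the kernel $K_m(\cdot-x)$ is supported away from $Q_{m+N,j}$ and its neighbours; combined with the concavity of $y\mapsto\Mp(x,y)$ (Lemma~\ref{Convexity}) and $p$-homogeneity, this yields a factor $2^{(m-n)(p-1)\alpha}$ and reduces to the energy-increment bound $E_{3^dQ_{m+N,j},0}[f]-E_{3^dQ_{m+N,j},M}[f]$ via Lemma~\ref{EnergyBound2}. The martingale flavour enters only through the telescoping energies $E_{Q,n}[f]$ and~\eqref{Telescope}; no genuine martingale is ever constructed.
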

We will later provide local (with respect to~$n$) inequalities, which imply Theorems~\ref{Main2} and~\ref{Remainder}. Now we will show how these theorems imply Theorem~\ref{Main}.
\begin{proof}[Derivation of Theorem~\ref{Main} from other theorems and lemmas in the case $p\leq 2$.]
Let~$\supp f \subset B_{\frac12}(0)$.  We estimate the left hand side  of~\eqref{MI} with the telescopic sum
\eq{
\Big|\int\limits_{\R^d}\Phi(K*f)\Big| \leq \Big|\int\limits_{\R^d}\Phi(K_{\leq 0}*f)\Big| + \sum\limits_{n \geq 0}\Big|\int\limits_{\R^d}\Phi(K_{\leq n+1}*f) - \int\limits_{\R^d}\Phi(K_{\leq n}*f)\Big|,
}
the series and the integrals converge since~$f\in C_0^\infty$; the only question is to obtain a uniform bound by~$\|f\|_{L_1}$. The first term is estimated with the help of Lemma~\ref{FirstLemma} and the inequality~$|\Phi(x)| \lesssim |x|^p$. Lemma~\ref{SecondLemma}, in its turn, reduces the bound of the second term to Theorems~\ref{Main2} and~\ref{Remainder}.
\end{proof}
Section~\ref{S2} contains the ``classical'' part of the proof that does not have the martingale flavor. This includes the proofs of Lemmas~\ref{FirstLemma} and~\ref{SecondLemma}, and the proofs of Theorems~\ref{Main2} and~\ref{Remainder} in the case~$p=2$ (in this case some parts of the argument simplify and we wish to present these simplifications). In particular, Section~\ref{S2} contains the proof of Theorem~\ref{Main} in the case~$p=2$. The case of general~$p$ is more complicated, it may be found in Section~\ref{S3} below. Section~\ref{SD} provides supplementary information such as comments on the case~$p\geq 2$, the correspondence between the martingale and the Euclidean settings, and suggestions for further study. The last Section~\ref{SAI} contains auxiliary inequalities that will be used in the proofs.

%\begin{St}\label{CompSup}
%The inequality
%\eq{
%\Big|\int\limits_{\R^d}\Phi(f*K_0)\Big| \lesssim \iint\limits_{|x-y| < 10^d} |x-y| |f(x)||f(y)|\,dx\,dy
%}
%holds true for all compactly supported functions~$f$ with a uniform constant.
%\end{St}
%\begin{proof}[Deduction of Theorem~\ref{Main2} from Proposition~\ref{CompSup}.]
%By rescaling (i.e. if we apply Proposition~\ref{CompSup} to the function~$f_n$ given by~$f_n(x) = f(2^{-n}x)$ and do appropriate substitutions in the integrals),
%\eq{
%\Big|\int\limits_{\R^d}\Phi(f*K_n)\Big| \lesssim\!\!\! \iint\limits_{|x-y| < 2^{-n}\cdot10^d}\!\!\! 2^{n}|x-y||f(x)||f(y)|\,dx\,dy.
%}
%We sum these inequalities over all~$n \geq 0$ and obtain the desired bound:
%\eq{
%\sum\limits_{n \geq 0}\Big|\int\limits_{\R^d}\Phi(f*K_n)\Big| \lesssim\sum\limits_{n \geq 0}\; \iint\limits_{|x-y| < 2^{-n}\cdot10^d}\!\!\! 2^{n}|x-y||f(x)||f(y)|\,dx\,dy \lesssim \|f\|_{L_1}^2,
%}
%the latter inequality is similar to~\eqref{GeomSer}.
%\end{proof}
%
%\begin{proof}[Proof of Proposition~\ref{CompSup}]
%Let us first prove a simpler statement: if~$f$ has compact support, then
%\eq{
%\Big|\int\limits_{\R^d}\Phi(f*K_0)\Big| \lesssim \iint\limits_{R^{2d}}|x-y||f(x)||f(y)|\,dx\,dy.
%}
%Let~$c$ be the geometric median of the disctribution~$|f|$, i.e. the minimizer of the function
%\eq{
%z\mapsto \int\limits_{R^d}|x-z||f(x)|\,dx.
%}
%We estimate
%\eq{
%\Big|\int\limits_{R^d}\Phi(f*K_0(x))\,dx\Big| \Eeqref{Cancellation} \Big|\int\limits_{R^d}\Big(\Phi(f*K_0(x)) - \Phi\big(\int f\cdot K_0(x-c)\big)\Big)\,dx\Big|
%}
%\end{proof}

\section{Real variable techniques}\label{S2}
\begin{proof}[Proof of Lemma~\ref{FirstLemma}.]
The kernel~$K_{\leq 0}$ is uniformly bounded, therefore, it suffices to prove the estimate
\eq{\label{Le2}
\int\limits_{|x| \geq 2} \Big|K_{\leq 0}*f(x)\Big|^p\,dx \lesssim \|f\|_{L_1(\R^d)}^p.
}
We use the condition~$\int f = 0$ and Lemma~\ref{K1} below:
\mlt{
\big| K_{\leq 0}*f(x) \big| = \Big|\int\limits_{\R^d}  \big[K_{\leq 0}(x-y) - K_{\leq 0}(x)\big]f(y)\,dy\Big|\\ \Lsref{\text{\tiny Lem. }\scriptscriptstyle \ref{K1}} \int\limits_{\R^d}|f(y)|\frac{|y|}{|x|^{d-\alpha+1}}\,dy \lesssim \frac{\|f\|_{L_1}}{|x|^{d-\alpha + 1}},\quad |x| \geq 2,
}
raise this estimate to the power~$p$, integrate with respect to~$x$, and obtain~\eqref{Le2}.
\end{proof}

\begin{proof}[Proof of Lemma~\ref{SecondLemma}.]
Let
\eq{
a_n(x) = K_{\leq n}*f (x);\quad b_n(x) = K_{n+1}*f(x). 
}
Let also~$A_n = \set{x\in \R^d}{|a_n(x)| \geq |b_n(x)|}$ and let~$B_n = \R^d \setminus A_n$. From Lemma~\ref{Phi1} below, we deduce
\alg{
\Big|\Phi\Big(a_n(x) + b_n(x)\Big) - \Phi\big(a_n(x)\big)\Big| \lesssim |a_n(x)|^{p-1}|b_n(x)| = \Mp(|a_n(x)|,|b_n(x)|),\quad &x\in A_n;\\
\Phi\Big(a_n(x) + b_n(x)\Big) - \Phi\big(a_n(x)\big) = \Phi\big(b_n(x)\big) + O\Big(\Mp\big(|a_n(x)|, |b_n(x)|\big)\Big),\quad &x\in B_n.
}
Therefore,
\mlt{
\Big|\int\limits_{\R^d}\Big(\Phi\big(a_n(x) + b_n(x)\big) - \Phi\big(a_n(x)\big)\Big)\,dx\Big|\lesssim \int\limits_{\R^d}\Mp\big(|a_n(x)|, |b_n(x)|\big)\,dx + \Big|\int\limits_{B_n}\Phi\big(b_n(x)\big)\,dx\Big|\\ \leq 
\Big|\int\limits_{\R^d}\Phi\big(b_n(x)\big)\,dx\Big| + \Big|\int\limits_{A_n}\Phi\big(b_n(x)\big)\,dx\Big| + \int\limits_{\R^d}\Mp\big(|a_n(x)|, |b_n(x)|\big)\,dx\\ \lesssim \Big|\int\limits_{\R^d}\Phi\big(b_n(x)\big)\,dx\Big| + \int\limits_{\R^d}\Mp\big(|a_n(x)|, |b_n(x)|\big)\,dx.
}
If we return back to the original notation, we see that this is exactly~\eqref{SecondLemmaIneq}.
\end{proof}

\begin{proof}[Proof of Theorem~\ref{Remainder} in the case~$p=2$.]
In this case,~$\Mp(x,y) = xy$, and the things simplify. We start with reducing the quadratic bound to a linear one:
\mlt{
\sum\limits_{n \geq 0}\; \int\limits_{\R^d} |K_{\leq n}*f(x)||K_{n+1}*f(x)|\,dx \leq \int\limits_{\R^d}|f|\cdot\Big[\Big(\sum\limits_{n \geq  0} |K_{\leq n}|* |K_{n+1}|\Big)*|f|\Big]\\ \leq \Big\|\sum\limits_{n \geq  0} |K_{\leq n}|* |K_{n+1}|\Big\|_{L_\infty} \|f\|_{L_1}^2.
}
It remains to prove
\eq{\label{LinftyBound}
\Big\|\sum\limits_{n \geq  0} |K_{\leq n}|* |K_{n+1}|\Big\|_{L_\infty} \lesssim 1.
}
We state the local estimate (see Lemma~\ref{K2} below for the proof)
\eq{\label{LocalKernelBound}
|K_{\leq n}|* |K_{n+1}| (x) \lesssim 2^n|x|(1+2^n|x|)^{-\frac{d}{2}-1},
}
the constant is uniform with respect to~$x$ and~$n$. We note that the fact that~$K_{\leq n}$ and~$K_{n+1}$ have disjoint supports is crucial for this inequality. We fix arbitrary~$x\in \R^d$, and finish the proof of~\eqref{LinftyBound}:
\eq{\label{GeomSer}
\sum\limits_{n \geq  0} |K_{\leq n}|* |K_{n+1}|(x) \lesssim \sum\limits_{n\geq 0}2^n|x|(1+2^n|x|)^{-\frac{d}{2}-1} \leq\!\!\! \sum\limits_{n\colon 2^n|x|\leq 1}2^{n}|x| +\!\!\! \sum\limits_{n\colon 2^n|x|\geq 1}(2^{n}|x|)^{-\frac{d}{2}} \lesssim 1.
}
\end{proof}
The proof of Theorem~\ref{Main2} may also be simplified in the case~$p=2$, however, we prefer to argue for general~$p$ for some time. The following lemma is the core of the whole paper.
\begin{Le}\label{MedianLe}
Let~$f$ be a compactly supported summable function. Then\textup,
\eq{\label{MedianLeIneq}
\Big|\int\limits_{\R^d}\Phi(K_0*f(x))\,dx\Big| \lesssim \|f\|_{L_1}^{p-1}\inf_{c\in\R^d}\Big(\int\limits_{\R^d}|x-c||f(x)|\,dx\Big).
}
\end{Le}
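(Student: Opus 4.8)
The plan is to use the cancellation~\eqref{IndCanc} to compare $\Phi(K_0*f)$ with $\Phi\big(mK_0(\,\cdot\,-c)\big)$, where $m=\int_{\R^d}f$ and $c\in\R^d$, and then to estimate the resulting difference as crudely as possible. The decisive point is that the ``large'' factor produced by the local Lipschitz estimate for~$\Phi$ can be bounded by an absolute constant times~$\|f\|_{L_1}$ \emph{pointwise in}~$x$, so that no H\"older inequality is needed at that step --- a naive splitting there would lose exactly the gain we are after and would fail for delta-like~$f$. Since the left-hand side of~\eqref{MedianLeIneq} does not depend on~$c$, it suffices to prove
\[
\Big|\int_{\R^d}\Phi\big(K_0*f(x)\big)\,dx\Big|\lesssim \|f\|_{L_1}^{p-1}\int_{\R^d}|x-c|\,|f(x)|\,dx
\]
for an arbitrary fixed~$c\in\R^d$ and then pass to the infimum over~$c$.

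First, fix~$c$ and put $m=\int f$. By~\eqref{IndCanc} (with $n=0$, $\lambda=m$, $y=c$) the integral $\int_{\R^d}\Phi\big(mK_0(x-c)\big)\,dx$ vanishes, so I would write $\int\Phi(K_0*f)=\int_{\R^d}\big[\Phi\big(K_0*f(x)\big)-\Phi\big(mK_0(x-c)\big)\big]\,dx$. Next I would apply the local Lipschitz estimate for~$\Phi$ (Lemma~\ref{Phi1} below), in the form $|\Phi(u)-\Phi(v)|\lesssim(|u|+|v|)^{p-1}|u-v|$, with $u=K_0*f(x)$ and $v=mK_0(x-c)$. Since $K_0$ is bounded, we have $|K_0*f(x)|\le\|K_0\|_{L_\infty}\|f\|_{L_1}$ and $|m|\,|K_0(x-c)|\le\|f\|_{L_1}\|K_0\|_{L_\infty}$ for \emph{every}~$x$, hence $(|u|+|v|)^{p-1}\lesssim\|f\|_{L_1}^{p-1}$ pointwise, and we arrive at
\[
\Big|\int_{\R^d}\Phi(K_0*f)\Big|\lesssim\|f\|_{L_1}^{p-1}\int_{\R^d}\big|K_0*f(x)-mK_0(x-c)\big|\,dx=\|f\|_{L_1}^{p-1}\,\big\|K_0*(f-m\delta_c)\big\|_{L_1}.
\]

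It then remains to bound $\big\|K_0*(f-m\delta_c)\big\|_{L_1}$ by $\int_{\R^d}|x-c|\,|f(x)|\,dx$. Because $\int f=m$, we may write $f-m\delta_c=\int_{\R^d}(\delta_y-\delta_c)\,f(y)\,dy$, so $K_0*(f-m\delta_c)=\int_{\R^d}\big[K_0(\,\cdot\,-y)-K_0(\,\cdot\,-c)\big]f(y)\,dy$, and Minkowski's integral inequality gives $\big\|K_0*(f-m\delta_c)\big\|_{L_1}\le\int_{\R^d}\big\|K_0(\,\cdot\,-y)-K_0(\,\cdot\,-c)\big\|_{L_1}\,|f(y)|\,dy$. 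The quantity $\|K_0(\,\cdot\,-v)-K_0\|_{L_1}$ is the $L_1$-modulus of continuity of~$K_0$ at the shift~$v$, and since $K_0\in\BV(\R^d)$ --- it is Lipschitz on the annulus $\mathcal A=\{z:1/2\le|z|\le1\}$ (this is where the Lipschitz regularity of~$\tilde K$ enters), vanishes off~$\mathcal A$, and has a jump across the smooth surface~$\partial\mathcal A$ --- the standard translation bound yields $\|K_0(\,\cdot\,-v)-K_0\|_{L_1}\le|DK_0|(\R^d)\,|v|\lesssim|v|$ for all~$v\in\R^d$. Taking $v=y-c$ and integrating gives $\big\|K_0*(f-m\delta_c)\big\|_{L_1}\lesssim\int_{\R^d}|y-c|\,|f(y)|\,dy$, which completes the argument.

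The only step demanding genuine care is the last one, namely checking that $K_0\in\BV(\R^d)$ with $|DK_0|(\R^d)$ bounded by a constant depending only on~$\tilde K$ and~$d$; this is where the geometry of the kernel (sharp truncation to an annulus, Lipschitz angular part) actually enters. It is routine: one may alternatively estimate $\|K_0(\,\cdot\,-v)-K_0\|_{L_1}$ directly, splitting $\R^d$ into the set where both~$z$ and~$z-v$ lie in~$\mathcal A$ (there the integrand is $\lesssim|v|$ by the interior Lipschitz bound) and its complement near~$\partial\mathcal A$, which has measure $\lesssim|v|$ and on which the integrand is $\lesssim1$. Note that this argument uses nothing about~$p$ beyond $p>1$.
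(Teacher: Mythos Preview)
Your proof is correct and follows essentially the same route as the paper: subtract $\Phi\big(mK_0(\cdot-c)\big)$ using the cancellation~\eqref{IndCanc}, pull out $\|f\|_{L_1}^{p-1}$ via the local Lipschitz bound for~$\Phi$ together with the pointwise bound $|K_0*f|,|mK_0(\cdot-c)|\lesssim\|f\|_{L_1}$, and then reduce to the $L_1$-modulus of continuity of~$K_0$ (your $\BV$ argument is exactly the content of Lemma~\ref{K3}). The only cosmetic difference is that the paper writes the last step directly as the double integral $\int_{\R^d}\int_{\R^d}|K_0(x-z)-K_0(x-y)|\,dx\,|f(z)|\,dz$ and invokes Lemma~\ref{K3}, whereas you phrase it as $\|K_0*(f-m\delta_c)\|_{L_1}$ and appeal to the $\BV$ translation bound.
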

\begin{proof}
Pick arbitrary~$y\in \R^d$. Using~\eqref{IndCanc} with~$n=0$, the inequality
 \eq{
 \Big|\Phi(a) - \Phi(b)\Big| \lesssim |a-b|\max(|a|,|b|)^{p-1},
 }
which follows from Lemma~\ref{Phi1} below, and the fact that~$K_0$ is a bounded function, we may write
\mlt{\label{LipschitzConvolution}
\Big|\int\limits_{\R^d}\Phi(K_0*f(x))\,dx\Big| = \Big|\int\limits_{\R^d}\Big(\Phi(K_0*f(x)) - \Phi\Big(K_0(x-y) \cdot \int f\Big)\Big)\,dx\Big|\\ \lesssim
\|f\|_{L_1}^{p-1}\int\limits_{\R^d}\Big|\int\limits_{\R^d} K_{0}(x-z)f(z)\,dz - \int\limits_{\R^d}K_0(x-y)f(z)\,dz\Big|\,dx\\ \leq
\|f\|_{L_1}^{p-1}\int\limits_{\R^d}\int\limits_{\R^d}|K_0(x-z) - K_0(x-y)|\,dx\; |f(z)|\,dz \lesssim\\ \|f\|_{L_1}^{p-1}\int\limits_{\R^{d}}|z-y||f(z)|\,dz.
}
The last inequality in the chain uses Lemma~\ref{K3} below. Since~$y$ is arbitrary, we get the desired inequality.
\end{proof}
Using dilation invariance, we may prove a similar inequality for any~$n$. An easy way to perform this computation is to consider dilations that preserve the~$L_1$ norm of~$f$ (i.e.~$f_n(x) = 2^{nd}f(2^n x)$) and notice that both the left hand and the right hand sides are preserved by such dilations (here one should use that~$p = d/(d-\alpha)$).
\begin{Cor}\label{MedianCor}
The inequality
\eq{\label{MedianCorIneq}
\Big|\int\limits_{\R^d}\Phi(K_n*f(x))\,dx\Big| \lesssim 2^{n}\|f\|_{L_1}^{p-1}\inf_{c\in\R^d}\int\limits_{\R^d}|x-c||f(x)|\,dx
}
holds true for any compactly supported summable function~$f$\textup; the constant is uniform with respect to~$n$.
\end{Cor}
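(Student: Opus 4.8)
The plan is to deduce Corollary~\ref{MedianCor} from Lemma~\ref{MedianLe} by the scaling argument sketched in the paragraph preceding the statement: the problem is dilation invariant in the precise sense that makes the $n$-th inequality a rescaled copy of the $n=0$ inequality. Fix $n$ and introduce the dilate $\tilde f(x) = f(2^{-n}x)$ (equivalently, one may work with the $L_1$-preserving normalization $f_n(x) = 2^{nd}f(2^n x)$; the two differ only by an innocuous change of variables). The function $\tilde f$ is again compactly supported and summable, so Lemma~\ref{MedianLe} applies to it.

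The computational heart of the matter is a single identity. Since $K_n(x) = 2^{(d-\alpha)n}K_0(2^n x)$, the substitution $y = 2^n z$ in the convolution integral gives
\[
K_0 * \tilde f(x) = 2^{\alpha n}\,(K_n * f)(2^{-n}x).
\]
First I would substitute this into the conclusion of Lemma~\ref{MedianLe} applied to $\tilde f$ and track how each side scales. On the left, the positive $p$-homogeneity of $\Phi$ together with the change of variables $x = 2^n u$ turns $\big|\int\Phi(K_0*\tilde f)\big|$ into $2^{\alpha n p + nd}\big|\int\Phi(K_n*f)\big|$. On the right, elementary changes of variables yield $\|\tilde f\|_{L_1} = 2^{nd}\|f\|_{L_1}$ and $\inf_{c}\int|x-c||\tilde f(x)|\,dx = 2^{nd+n}\inf_{c}\int|x-c||f(x)|\,dx$.

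Collecting the powers of $2$, the estimate for $\tilde f$ reads
\[
2^{\alpha n p + nd}\Big|\int_{\R^d}\Phi(K_n*f)\Big| \lesssim 2^{nd(p-1)}2^{nd+n}\,\|f\|_{L_1}^{p-1}\inf_{c}\int_{\R^d}|x-c||f(x)|\,dx,
\]
so the claimed inequality~\eqref{MedianCorIneq} follows provided the residual exponent $nd(p-1) + nd + n - \alpha n p - nd = n\big(p(d-\alpha) - d + 1\big)$ equals exactly $n$. This is where the hypothesis $p = d/(d-\alpha)$, i.e. $p(d-\alpha) = d$, enters decisively.

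Since every step above is an equality or a change of variables, there is no genuine obstacle; the only point requiring care is the bookkeeping of the four dilation exponents and verifying that they combine to leave precisely the factor $2^n$. (With the alternative normalization $f_n$, both $\|f_n\|_{L_1}$ and $\big|\int\Phi(K_n*f)\big|$ are preserved by the rescaling and only the first-moment term carries a factor $2^{-n}$, which makes the cancellation of exponents even more transparent; the use of $p = d/(d-\alpha)$ is then hidden in the invariance of $\big|\int\Phi(K_n*f)\big|$ under $L_1$-preserving dilations.)
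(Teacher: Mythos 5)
Your proposal is correct and is essentially the paper's own argument: the corollary is obtained from Lemma~\ref{MedianLe} by dilation invariance, and your exponent bookkeeping (using $p(d-\alpha)=d$ to leave exactly the factor $2^n$) checks out. The only cosmetic difference is your choice of the dilate $\tilde f(x)=f(2^{-n}x)$ instead of the $L_1$-preserving normalization $f_n(x)=2^{nd}f(2^nx)$ suggested in the paper, which you also note.
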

The inequality~\eqref{MedianCorIneq} seems to be quite sharp when~$f$ is supported on a ball of radius~$\sim 2^{-n}$. In fact, Theorem~\ref{Main2} will be derived from Corollary~\ref{MedianCor} by splitting general~$f$ into parts and applying the corollary to each of the parts. 
\begin{Rem}
The quantity on the right hand side of~\eqref{MedianLeIneq} is monotone with respect to~$f$ in the following sense\textup: if~$|g| \leq |f|$ almost everywhere\textup, then
\eq{
\|g\|_{L_1}^{p-1}\inf_{c\in\R^d}\int\limits_{\R^d}|x-c||g(x)|\,dx \leq \|f\|_{L_1}^{p-1}\inf_{c\in\R^d}\int\limits_{\R^d}|x-c||f(x)|\,dx.
}
We will use this principle quite often without mention\textup; usually\textup, we will have~$g = f\chi_{\Omega}$ for some~$\Omega \subset \R^d$.
\end{Rem}

Let~$Q_{k,j}$ be the grid of dyadic cubes:
\eq{
Q_{k,j} = \prod\limits_{i=1}^d\Big[2^{-k}j_i,2^{-k}(j_i+1)\Big],\qquad k \geq 0,\quad j = (j_1,j_2,\ldots, j_d) \in \Z^d.
}
We are ready to state the local version of Theorem~\ref{Main2}. When~$Q$ is cube in~$\R^d$ whose sidelength is~$\ell(Q)$ and~$\lambda > 0$ is a scalar, the notation~$\lambda Q$ means the cube with the same center as~$Q$ and with the sidelength~$\lambda \ell(Q)$.
\begin{Th}\label{LocalMain2}
For any~$n \geq 0$, the inequality
\eq{\label{LocalMain2Ineq}
\Big|\int\limits_{\R^d}\Phi(K_{n+1}*f)\Big| \lesssim 2^n \sum\limits_{j\in\Z^d}\|f\|_{L_1(3^d Q_{n,j})}^{p-1}\inf\limits_{c_j}\int\limits_{3^d Q_{n,j}}|x-c_j||f(x)|\,dx
}
holds true with the constant independent of~$f$ and~$n$.
\end{Th}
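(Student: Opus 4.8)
The inequality in Theorem~\ref{LocalMain2} is a ``localized and summed'' version of Corollary~\ref{MedianCor} applied at scale $n+1$, so the plan is to decompose the convolution $K_{n+1}*f$ according to the dyadic grid $Q_{n,j}$ at scale $2^{-n}$, use the disjointness of the supports of the kernels $K_{n+1}$, and then apply Corollary~\ref{MedianCor} piecewise. The crucial geometric fact is that $K_{n+1}$ is supported in the annulus $\{2^{-n-2}\le|x|\le2^{-n-1}\}$, which has diameter $\lesssim 2^{-n}$; hence for $x$ in a fixed dyadic cube $Q_{n,j}$, the value $K_{n+1}*f(x)=\int K_{n+1}(x-z)f(z)\,dz$ only sees the values of $f$ on a bounded dilate of $Q_{n,j}$, say $z\in 3^dQ_{n,j}$ (the precise constant is unimportant; only finitely many neighbours matter).

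First I would write $f=\sum_{j}f_j$ where $f_j=f\chi_{Q_{n,j}}$, so that $K_{n+1}*f=\sum_j K_{n+1}*f_j$, and observe that for $x\in Q_{n,i}$ the only terms $K_{n+1}*f_j(x)$ that can be nonzero are those with $j$ a neighbour of $i$ (finitely many, the number depending only on $d$). Thus on $Q_{n,i}$ the function $K_{n+1}*f$ equals $K_{n+1}*(f\chi_{3^dQ_{n,i}})$. Next I would exploit the pointwise bound $|\Phi(a+b)-\Phi(a)-\Phi(b)|\lesssim \M_p(|a|,|b|)$ together with $|\Phi(x)|\lesssim|x|^p$... but in fact the cleaner route is to first reduce, via the cancellation~\eqref{IndCanc} and a telescoping argument over the neighbour cubes exactly as in the proof of Lemma~\ref{SecondLemma}, to estimating $\sum_i|\int_{Q_{n,i}}\Phi(K_{n+1}*(f\chi_{3^dQ_{n,i}}))|$ plus cross terms controlled by $\M_p$. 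Since summing the resulting $\M_p$ cross terms is precisely the content of Theorem~\ref{Remainder} (and is negligible at a single scale $n$), the main contribution is $\sum_i|\int \Phi(K_{n+1}*(f\chi_{3^dQ_{n,i}}))|$.

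To that main term I would apply Corollary~\ref{MedianCor} with $n$ replaced by $n+1$ and with $f$ replaced by $f\chi_{3^dQ_{n,i}}$: this gives
\[
\Big|\int_{\R^d}\Phi(K_{n+1}*(f\chi_{3^dQ_{n,i}}))\Big|\lesssim 2^{n+1}\|f\|_{L_1(3^dQ_{n,i})}^{p-1}\inf_{c_i}\int_{3^dQ_{n,i}}|x-c_i||f(x)|\,dx,
\]
and summing over $i\in\Z^d$ yields exactly~\eqref{LocalMain2Ineq} (absorbing the factor $2$ and using $3^{d}Q_{n,i}$ in place of $Q_{n,i}$, relabeling $j=i$). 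The only subtlety is that $\int\Phi(K_{n+1}*f)$ over $\R^d$ must be reassembled from the pieces over the cubes $Q_{n,i}$ with controlled error, which is where the telescoping/$\M_p$ step enters.

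**Main obstacle.** The delicate point is the bookkeeping in passing from $\int_{\R^d}\Phi(K_{n+1}*f)$ to the sum over cubes of $\int_{Q_{n,i}}\Phi(K_{n+1}*(f\chi_{3^dQ_{n,i}}))$: because $\Phi$ is only $p$-homogeneous and not additive, one cannot simply split the integral, and the overlap of the neighbour cubes has to be handled by a finite telescoping of the kernels (ordering the neighbouring cubes and peeling them off one at a time, using the cancellation~\eqref{IndCanc} to kill the ``pure'' terms $\int\Phi(\lambda K_{n+1}(\cdot-y))$, and bounding the successive differences by $\M_p$ via Lemma~\ref{Phi1}, exactly as in Lemma~\ref{SecondLemma}). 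I expect this localization step — making the passage from global to local quantitative with an error that telescopes into the already-established $\M_p$ bound — to be the part requiring the most care; the application of Corollary~\ref{MedianCor} afterwards is routine.
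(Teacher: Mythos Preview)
Your overall plan---localize $f$ to dyadic cubes and apply Corollary~\ref{MedianCor} to each localized piece---is exactly the right starting point, and it is what the paper does. The gap is in how you propose to handle the ``cross terms''.

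First, invoking Theorem~\ref{Remainder} is a red herring. That theorem controls $\Mp(|K_{\leq n}*f|,|K_{n+1}*f|)$, i.e.\ the interaction between \emph{different scales} of the kernel; your cross terms are of the form $\Mp(|K_{n+1}*f_j|,|K_{n+1}*f_{j'}|)$, i.e.\ the \emph{same} kernel against different spatial pieces of $f$. Nothing in Theorem~\ref{Remainder} (or Lemma~\ref{SecondLemma}) addresses that. Second, and more seriously, the pointwise bound $|\Phi(a+b)-\Phi(a)-\Phi(b)|\lesssim\Mp(|a|,|b|)$ is too crude to be dominated by the right side of~\eqref{LocalMain2Ineq}. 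Take $n=0$ and let $f$ be (a mollification of) $\delta_a+\delta_{a'}$ with $a\in Q_{0,j}$, $a'\in Q_{0,j'}$ in adjacent unit cubes and $|a-a'|=\varepsilon\ll 1$. Then $K_1*f_j\approx K_1(\cdot-a)$ and $K_1*f_{j'}\approx K_1(\cdot-a')$ are both of size $\sim 1$ on overlapping annuli of measure $\sim 1$, so $\int\Mp(|K_1*f_j|,|K_1*f_{j'}|)\sim 1$. But every cube $3^dQ_{0,i}$ meeting $\supp f$ contains both $a$ and $a'$, whence $\inf_c\int_{3^dQ_{0,i}}|x-c||f|\le\varepsilon$, and the whole right side of~\eqref{LocalMain2Ineq} is $\lesssim\varepsilon$. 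The true left side $|\int\Phi(K_1*f)|$ is of course $\lesssim\varepsilon$ as well (by Corollary~\ref{MedianCor}); the point is that the cancellation in $\int\Phi$ makes this happen, and your pointwise $\Mp$ bound discards precisely that cancellation.

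The paper avoids any pointwise estimate of the overlap. It runs an induction on the number $D=0,1,\ldots,d$ of unbounded directions in the support of $f$: the base case $D=0$ is Corollary~\ref{MedianCor}, and for the step $D\to D+1$ one slices $f=\sum_i f_i$ into parallel slabs $\Pi_i$ of width $3^{d-D-1}\ge 1$ along the new direction. Since $K_1$ has support in $B_{1/2}(0)$, only \emph{consecutive} functions $K_1*f_i$ overlap, and the integrand $\Phi(K_1*(f_i+f_{i+1}))-\Phi(K_1*f_i)-\Phi(K_1*f_{i+1})$ vanishes outside that overlap. Hence its integral over the overlap equals its integral over all of~$\R^d$, and one may apply the triangle inequality to the \emph{full} integrals:
\[
\Big|\int_{\R^d}\Phi(K_1*f)\Big|\lesssim \sum_i\Big|\int_{\R^d}\Phi(K_1*f_i)\Big|+\sum_i\Big|\int_{\R^d}\Phi\big(K_1*(f_i+f_{i+1})\big)\Big|.
\]
Each term on the right is again of the form $|\int\Phi(K_1*g)|$ with $g$ supported on a slab having only $D$ unbounded directions, so the inductive hypothesis applies. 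No $\Mp$ enters; the cancellation survives because absolute values are taken only outside the integral.
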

\begin{proof}
By dilation invariance, it suffices to prove Theorem~\ref{LocalMain2} for~$n=0$ only. By Lemma~\ref{MedianLe}, we know the desired estimate for the case where~$f$ is supported on one of the cubes~$3^dQ_{n,j}$. We will be proving by induction with respect to~$D = 0,1,2,\ldots, d$ the following statement: \emph{the estimate~\eqref{LocalMain2Ineq} holds true when~$f$ is supported on a parallelepiped of dimensions
\eq{
\underbrace{\infty\times\infty\times\ldots\times\infty}_{D \text{\emph{ times}}}\times 3^{d-D}\times 3^{d-D}\times\ldots \times 3^{d-D}.
}  
}
The base follows from Lemma~\ref{MedianLe} (rather from Corollary~\ref{MedianCor} since we need to work with~$K_1$ instead of~$K_0$), the case~$D=d$ implies the theorem. Let us prove the induction step~$D \to D+1$. Let~$f$ be supported on~$\Pi$, where~$\Pi$ is the parallelepiped of dimensions 
\eq{
\underbrace{\infty\times\infty\times\ldots\times\infty}_{D+1 \text{ times}}\times 3^{d-D-1}\times 3^{d-D-1}\times\ldots \times 3^{d-D-1}.
}
We split it into the parallelepipeds~$\Pi_i$,~$i\in\Z$, of dimensions
\eq{\label{DimensionsOfPar}
\underbrace{\infty\times\infty\times\ldots\times\infty}_{D \text{ times}}\times 3^{d-D-1}\times 3^{d-D-1}\times\ldots \times 3^{d-D-1}.
}  
in a natural way (see Figure~\ref{SplittingIntoParal}).

Let~$f_i = f\chi_{\Pi_i}$. Note that the inductive hypothesis provides us with the inequality
\eq{\label{FromIndHyp}
\sum\limits_{i} \Big|\int\limits_{\R^d}\Phi(K_1*f_i)\Big| \lesssim \sum\limits_{j\in\Z^d}\|f\|_{L_1(3^d Q_{0,j})}^{p-1}\inf\limits_{c_j}\int\limits_{3^d Q_{0,j}}|x-c_j||f(x)|\,dx.
}
We will use that the functions~$K_1*f_i$ have almost disjoint supports. Let
\eq{
L_{i} = B_{\frac12}(\Pi_i)\cap B_\frac12(\Pi_{i+1}).
}
We may write:
\mlt{\label{LongFormula}
\Big|\int\limits_{\R^d}\Phi(K_1*f)\Big|\\
 \lesssim \sum\limits_{i}\Big|\int\limits_{\R^d} \Phi(K_1*{f_i})\Big| + \sum\limits_{i}\Big|\int\limits_{L_i}\Big(\Phi(K_1*(f_{i} + f_{i+1}))-\Phi(K_1*f_i)-\Phi(K_1*f_{i+1})\Big)\Big|\\ =
\sum\limits_{i}\Big|\int\limits_{\R^d} \Phi(K_1*{f_i})\Big| + \sum\limits_{i}\Big|\int\limits_{\R^d}\Big(\Phi(K_1*(f_{i} + f_{i+1}))-\Phi(K_1*f_i)-\Phi(K_1*f_{i+1})\Big)\Big|,
}
because the function
\eq{
\Phi(K_1*(f_{i} + f_{i+1}))-\Phi(K_1*f_i)-\Phi(K_1*f_{i+1})
}
vanishes outside~$L_i$. The first sum in~\eqref{LongFormula} is bounded by~\eqref{FromIndHyp}. For the second sum, we use the triangle inequality:
\mlt{
\sum\limits_{i}\Big|\int\limits_{\R^d}\Big(\Phi(K_1*(f_{i} + f_{i+1}))-\Phi(K_1*f_i)-\Phi(K_1*f_{i+1})\Big)\Big|\\ \leq \sum\limits_{i} \Big|\int\limits_{\R^d}\Phi(K_1*(f_{i} + f_{i+1}))\Big| + 2\Big|\int\limits_{\R^d}\Phi(K_1*f_{i})\Big|.
}
Since the functions~$f_i + f_{i+1}$ are supported on the parallelepipeds
\eq{\label{DimensionsOfPar}
\underbrace{\infty\times\infty\times\ldots\times\infty}_{D \text{ times}}\times 3^{d-D}\times 3^{d-D}\times\ldots \times 3^{d-D},
}  
we may apply the inductive hypothesis to them and complete the induction step.
\begin{figure}
	\includegraphics[width = 0.5\textwidth]{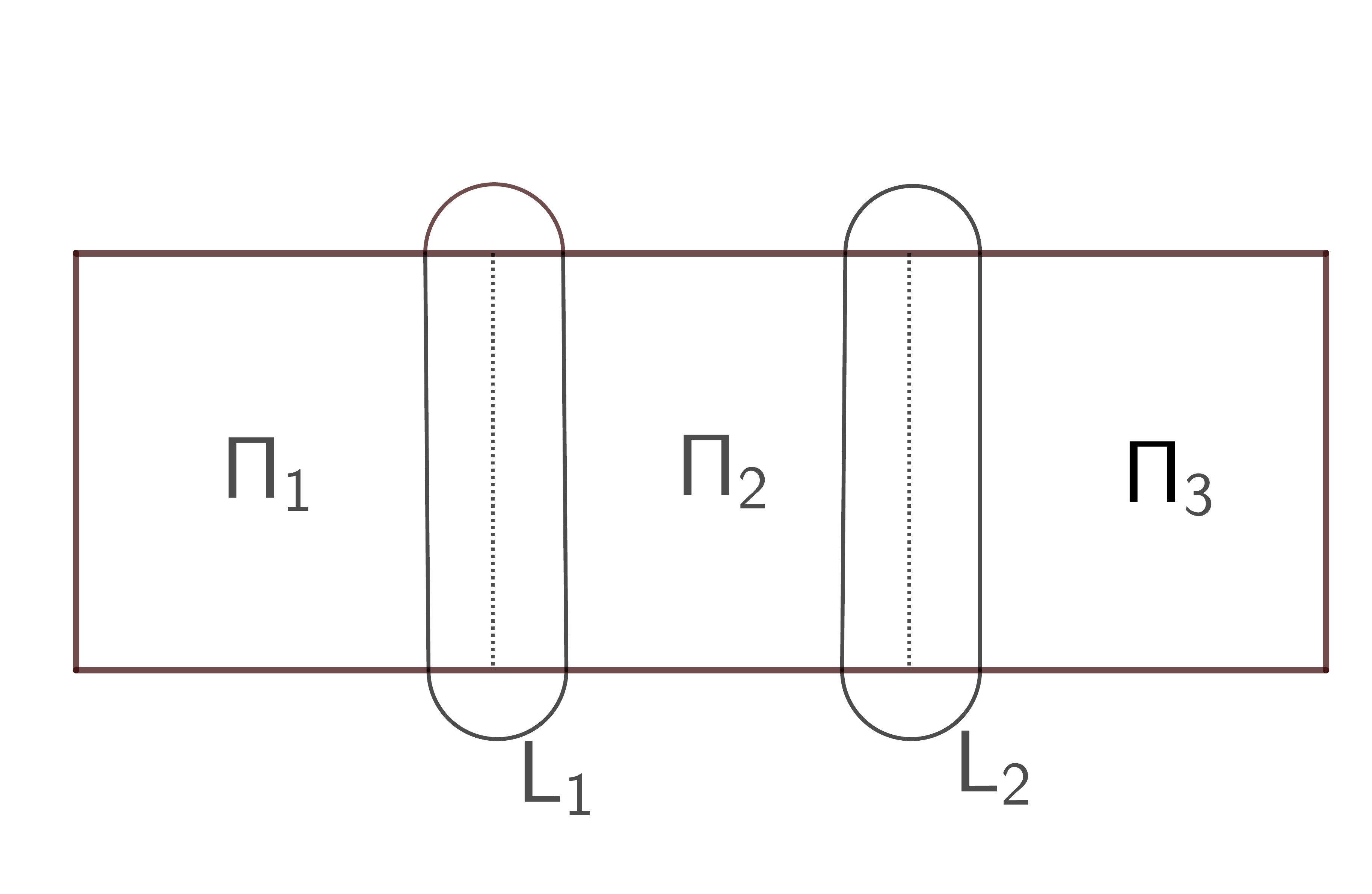}
	\includegraphics[width = 0.5\textwidth]{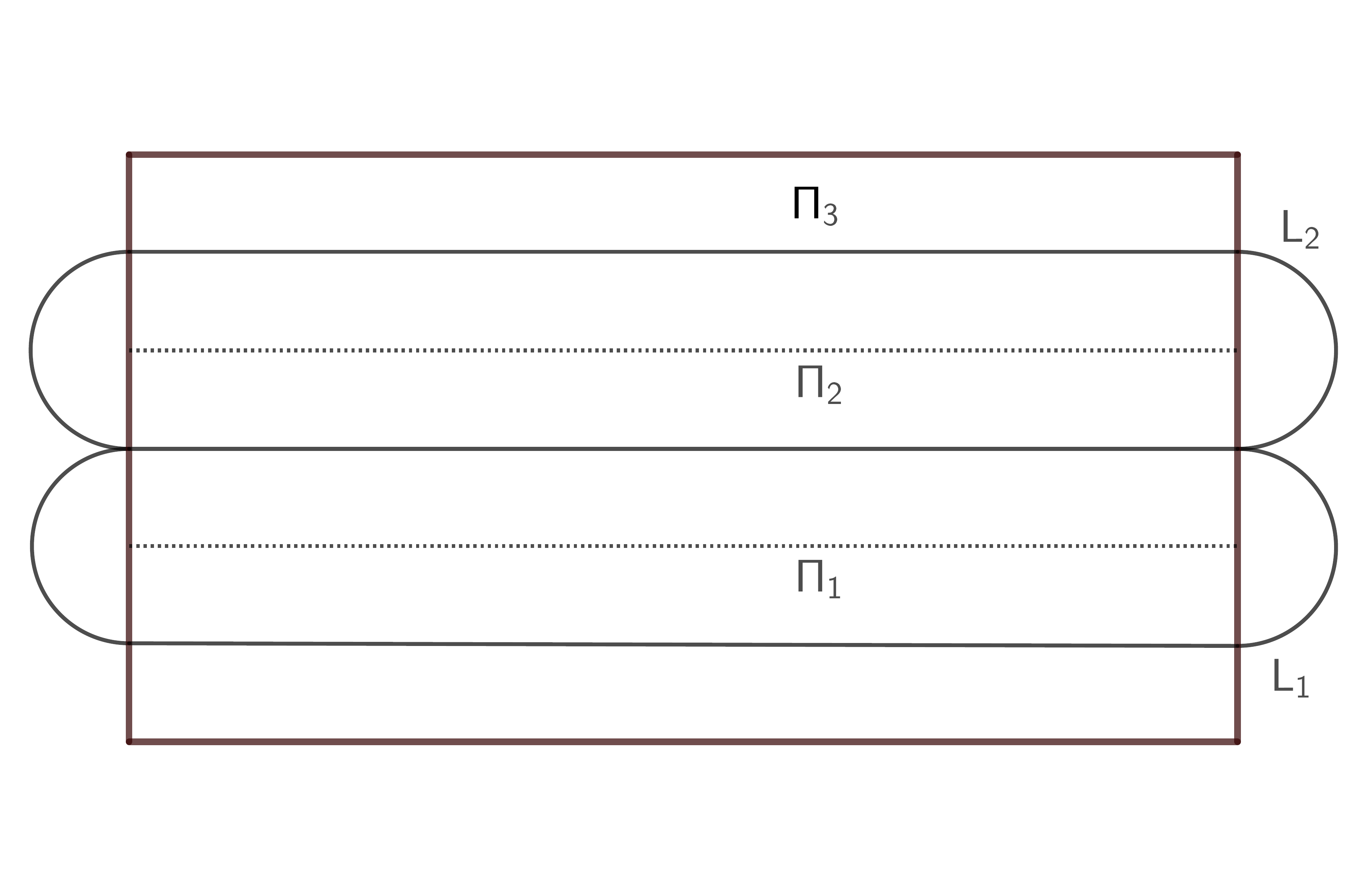}
	\caption{The rectangles~$\Pi_j$ and the sets~$L_j$ when~$d=2$ and~$D=0$ or~$D=1$. The common boundary of~$\Pi_j$ and~$\Pi_{j+1}$ is marked with a dotted line.}
	\label{SplittingIntoParal}
\end{figure}
\end{proof}
\begin{proof}[Proof of Theorem~\ref{Main2} in the case~$p=2$.]
In the light of Theorem~\ref{LocalMain2}, it suffices to prove the inequality
\eq{\label{Monulus}
\sum\limits_{n \geq 0}2^n \sum\limits_{j\in\Z^d}\|f\|_{L_1(3^d Q_{n,j})}^{}\inf\limits_{c_j}\int\limits_{3^d Q_{n,j}}|x-c_j||f(x)|\,dx\lesssim \|f\|_{L_1}^2.
}
We use the elementary estimate
\eq{
\|f\|_{L_1(3^d Q_{n,j})}^{}\inf\limits_{c_j}\int\limits_{3^d Q_{n,j}}|x-c_j||f(x)|\,dx \leq\!\!\!\!\!\! \iint\limits_{3^dQ_{n,j}\times 3^dQ_{n,j}}\!\!\!\!\!\! |x-y||f(x)||f(y)|\,dx\,dy,
}
and bound the left hand side of~\eqref{Monulus} by
\eq{
\sum\limits_{n\geq 0}\quad 2^n\!\!\!\!\!\!\!\!\iint\limits_{|x-y| \leq \sqrt{d}\, 3^d 2^{-n}}\!\!\!\! |x-y||f(x)||f(y)|\,dx\,dy.
}
It remains to notice that the functional series
\eq{
g(z) = \sum\limits_{n\geq 0}2^n|z|\chi_{B_{\sqrt{d}\, 3^d2^{-n}}(0)}(z)
}
is uniformly bounded (this estimate is similar to~\eqref{GeomSer}).
\end{proof}

\section{Martingale techniques}\label{S3}
If~$Q\subset \R^d$ is a cube, we denote its sidelength by~$\ell(Q)$ and the set of all its dyadic subcubes by~$\D(Q)$. This set admits a natural tree structure. Let~$\D_n(Q)$ be the set of all cubes~$Q'\in \D(Q)$ of generation~$n$; the cube~$Q$ itself is of generation~$0$.
\begin{Def}
Let~$f$ be a locally summable function\textup, let~$Q\subset \R^d$ be a cube\textup, let~$p \in [1,\infty)$. Define the quantity
\eq{
E_{Q,n}[f] = \sum\limits_{Q' \in \D_n(Q)}\Big(\int\limits_{Q'}|f(x)|\,dx\Big)^p. 
}
\end{Def}
One may see that the quantities~$E_{Q,n}[f]$ do not increase with~$n$. Thus,
\eq{\label{Telescope}
\|f\|_{L_1(Q)}^p = \sum\limits_{n\geq 0}\Big(E_{Q,n}[f] - E_{Q,n+1}[f]\Big),
}
and each summand in this telescopic sum is non-negative. %(for locally summable functions this inequality turns into equality; the same holds true in the case where~$f$ is a non-atomic measure). 
\begin{Le}\label{EstimateByEnergyIncrement}
Let~$p \in (1,\infty)$. There exists~$\eps \in (0,1)$ such that
\eq{
 \|f\|_{L_1(Q)}^{p-1}\inf\limits_{c}\int\limits_{Q}\frac{|x-c|}{\ell(Q)} |f(x)|\,dx \lesssim \sum\limits_{n\geq 0}(1-\eps)^n\Big(E_{Q,n}[f] - E_{Q,n+1}[f]\Big)
 }
for any cube~$Q \subset \R^d$ and any summable~$f$ with compact support.
The constant in this inequality depends neither on~$f$ nor on the choice of~$Q$.
\end{Le}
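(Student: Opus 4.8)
The plan is to replace the global inequality by a self-similar recursion run along a greedy chain of dyadic subcubes of $Q$. Since only $|f|$ enters both sides, we may assume $f\geq 0$; replacing $f$ by $f\chi_Q$ changes nothing, and no rescaling of $Q$ or of $\|f\|_{L_1(Q)}$ is needed (all quantities below are finite because $f$ is summable with compact support). For measurable $S$ write $\mu(S)=\int_S f$, and for a dyadic cube $S\in\D(Q)$ set
\[
A(S)=\mu(S)^{p-1}\inf_{c\in\overline S}\int_S\frac{|x-c|}{\ell(S)}f(x)\,dx,\qquad B(S)=\sum_{n\geq0}(1-\eps)^n\big(E_{S,n}[f]-E_{S,n+1}[f]\big).
\]
The left-hand side of the lemma is at most $A(Q)$ (the infimum over $\R^d$ is no larger than the one over $\overline Q$) and the right-hand side is exactly $B(Q)$. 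The decisive structural fact is that $B$ satisfies the exact recursion $B(S)=\Delta_0(S)+(1-\eps)\sum_{S'\in\D_1(S)}B(S')$, where $\Delta_0(S)=\mu(S)^p-\sum_{S'\in\D_1(S)}\mu(S')^p\geq0$ is the first‑generation energy increment; since each $B(S')\geq0$, this yields $B(S)\geq\Delta_0(S)$ and, for any child $S^*$ of $S$, the one‑step bound $B(S)\geq\Delta_0(S)+(1-\eps)B(S^*)$.

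Next I would establish an almost‑subadditivity bound for $A$. Let $S^*$ be a child of $S$ of maximal mass and let $c^*\in\overline{S^*}\subseteq\overline S$ realize the infimum defining $A(S^*)$. Splitting $\int_S=\int_{S^*}+\int_{S\setminus S^*}$, using $\ell(S^*)=\ell(S)/2$ on the first part and the trivial bound $|x-c^*|\leq\sqrt d\,\ell(S)$ on the second, gives
\[
A(S)\;\leq\;\tfrac12\,\frac{\mu(S)^{p-1}}{\mu(S^*)^{p-1}}\,A(S^*)\;+\;\sqrt d\,\mu(S)^{p-1}\big(\mu(S)-\mu(S^*)\big).
\]
I would fix a parameter $\theta\in(2^{-1/(p-1)},1)$ and then choose $\eps$ so that $\tfrac12\theta^{-(p-1)}=1-\eps$; this forces $\eps\in(0,\tfrac12)$ and makes $\eps$, like all constants below, depend only on $p$ and $d$. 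Now run a dichotomy at $S$. If $\mu(S^*)\leq\theta\mu(S)$ (mass spread among the children), then $\sum_{S'}\mu(S')^p\leq\mu(S^*)^{p-1}\mu(S)\leq\theta^{p-1}\mu(S)^p$, so $\Delta_0(S)\geq(1-\theta^{p-1})\mu(S)^p$, while the trivial bound $A(S)\leq\sqrt d\,\mu(S)^p$ gives $A(S)\lesssim\Delta_0(S)\leq B(S)$, and we stop. If $\mu(S^*)>\theta\mu(S)$ (mass concentrated), then $\tfrac12\mu(S)^{p-1}/\mu(S^*)^{p-1}\leq 1-\eps$, and a mean‑value estimate for $t\mapsto t^{p-1}$ on $[\mu(S^*),\mu(S)]$ (treating $p\leq 2$ and $p\geq 2$ separately, using $\mu(S^*)>\theta\mu(S)$) together with $\Delta_0(S)\geq\mu(S)\big(\mu(S)^{p-1}-\mu(S^*)^{p-1}\big)$ shows that the error term $\sqrt d\,\mu(S)^{p-1}(\mu(S)-\mu(S^*))$ is $\lesssim\Delta_0(S)$.

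Finally I would iterate. Put $S_0=Q$ and let $S_{j+1}$ be a maximal‑mass child of $S_j$; in the concentrated case the two displays above yield $A(S_j)\leq(1-\eps)A(S_{j+1})+C\,\Delta_0(S_j)$, and we continue until (if ever) the spread case occurs at some $S_J$, where $A(S_J)\leq C\,B(S_J)$. Telescoping this chain against the recursion $B(S_j)\geq\Delta_0(S_j)+(1-\eps)B(S_{j+1})$ — and observing that $(1-\eps)^K A(S_K)\leq(1-\eps)^K\sqrt d\,\mu(Q)^p\to 0$, which takes care of the case when the chain never terminates — gives
\[
A(Q)\;\leq\;C\Big(\sum_{j<J}(1-\eps)^j\Delta_0(S_j)+(1-\eps)^J B(S_J)\Big)\;\leq\;C\,B(Q),\qquad C=C(p,d),
\]
which is the assertion. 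I expect the concentrated case to be the main obstacle: one has only the single first‑generation increment $\Delta_0(S)$ at one's disposal, and the point is that the hypothesis $\mu(S^*)>\theta\mu(S)$ is precisely what makes the mean‑value lower bound for $\Delta_0(S)$ strong enough to swallow the geometric error term — the trade‑off between this and the contraction factor $\tfrac12\theta^{-(p-1)}<1$ being what determines the admissible range of $\eps$.
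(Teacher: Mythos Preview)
Your proof is correct and follows essentially the same route as the paper's: run the greedy maximal--mass chain of dyadic children, fix a threshold (your $\theta$ corresponds to the paper's $1-\delta$, and the resulting $\eps=1-\tfrac12\theta^{-(p-1)}$ matches the paper's $\eps=1-(2(1-\delta)^{p-1})^{-1}$), continue while mass stays concentrated, and stop the first time it spreads, using the first--generation energy increment to absorb the error at each step. Your packaging via the exact recursion $B(S)=\Delta_0(S)+(1-\eps)\sum_{S'}B(S')$ and an iterated one--step inequality for $A$ is slightly cleaner than the paper's single telescoping after choosing the stopping time $N$ and the limit point $c_0$, but the mathematical content is the same.
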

\begin{Rem}
As we will see in the proof\textup, one may take~$\eps = \frac12-$. The quantity~$\ell(Q)$ in the denominator is a scaling factor related to the power~$2^n$ on the right hand side of~\eqref{MedianCorIneq}.
\end{Rem}
\begin{proof}
Without loss of generality, we may assume~$Q = [0,1]^{d}$. We will define the cubes~$R_0,R_1,\ldots,R_n,\ldots$ inductively. For each~$n$, we will have~$R_n \in \D_n(Q)$ and~$R_{n+1} \subset R_n$. In fact, we will always choose a cube with the largest mass (if there are several cubes with the largest mass, we may choose any one of them):
\eq{
\int\limits_{R_{n+1}}|f(x)|\,dx = \max\limits_{R \in \D_1(R_n)} \int\limits_{R}|f(x)|\,dx.
}
Let~$c_0$ be the unique common point of the cubes~$R_n$. We will prove the inequality
\eq{
 \|f\|_{L_1(Q)}^{p-1}\int\limits_{Q} |x-c_0||f(x)|\,dx \lesssim \sum\limits_{n\geq 0}(1-\eps)^n\Big(E_{R_n,0}[f] - E_{R_n,1}[f]\Big),
}
which implies the desired one.

Pick~$\delta > 0$ such that~$2(1-\delta)^{p-1} > 1$. Let~$N \in \N\cup \{0,\infty\}$ be the smallest possible~$n$ such that
\eq{
\int\limits_{R_{n+1}}|f(x)|\,dx \leq (1-\delta)\int\limits_{R_n}|f(x)|\,dx.
}
In particular,
\eq{
\|f\|_{L_1(Q)} \leq (1-\delta)^{-n} \|f\|_{L_1(R_n)},\quad n\leq N.
}
Then,
\mlt{
\|f\|_{L_1(Q)}^{p-1}\int\limits_{Q} |x-c_0||f(x)|\,dx\\ \lesssim \sum\limits_{n=0}^N \|f\|_{L_1(Q)}^{p-1} 2^{-n}\!\!\!\! \int\limits_{R_{n}\setminus R_{n+1}} |f(x)|\,dx + \|f\|_{L_1(Q)}^{p-1} 2^{-N} \int\limits_{R_{N+1}} |f(x)|\,dx\\ \lesssim
\sum\limits_{n=0}^N (2(1-\delta)^{p-1})^{-n}\|f\|_{L_1(R_n)}^{p-1} \int\limits_{R_{n}\setminus R_{n+1}} |f(x)|\,dx\\ + \delta^{-1}(2(1-\delta)^{p-1})^{-N} \|f\|_{L_1(R_N)}^{p-1}\int\limits_{R_{N}\setminus R_{N+1}} |f(x)|\,dx.
}
By Lemma~\ref{EnergyBound} below,
\eq{
\|f\|_{L_1(R_n)}^{p-1} \int\limits_{R_{n}\setminus R_{n+1}} |f(x)|\,dx \lesssim E_{R_n,0}[f] - E_{R_n,1}[f]
}
for any~$n \geq 0$. Thus,
\eq{
\|f\|_{L_1(Q)}^{p-1}\int\limits_{Q} |x-c_0||f(x)|\,dx \lesssim \sum\limits_{n=0}^N (2(1-\delta)^{p-1})^{-n}\Big(E_{R_n,0}[f] - E_{R_n,1}[f]\Big),
} 
so, the lemma holds true with~$\eps =1- (2(1-\delta)^{p-1})^{-1}$.
\end{proof}
We are almost ready to prove Theorem~\ref{Main2}. By Theorem~\ref{LocalMain2}, it suffices to verify the inequality
\eq{
\sum\limits_{n \geq 0}2^n \sum\limits_{j\in\Z^d}\|f\|_{L_1(3^d Q_{n,j})}^{p-1}\inf\limits_{c_j}\int\limits_{3^d Q_{n,j}}|x-c_j||f(x)|\,dx \lesssim \|f\|_{L_1}^p.
}
If we had the cubes~$Q_{n,j}$ instead of~$3^d Q_{n,j}$ on the left hand side, we could have used Lemma~\ref{EstimateByEnergyIncrement} and have reduced this inequality to~\eqref{Telescope} by interchanging the order of summation and the use of~$\sum_{n \geq 0}(1-\eps)^n \lesssim 1$. The problem is that the cubes~$3^dQ_{n,j}$ are not dyadic anymore, they do not form a tree. We will use the standard instrument for getting around this difficulty called the Three Lattice Theorem (see Theorem~$3.1$ in~\cite{LernerNazarov2019}). It says (in a slightly simplified form) that for any cube~$Q$ there exist cubes~$Q^{1},Q^{2},\ldots, Q^{s}$ with~$s = 3^d$ such that for any cube~$R\in \D(Q)$ there exists a number~$i =1,2,\ldots,s$ such that~$3R \in \D(Q^i)$. Of course, in such a case, the cubes~$Q^{i}$ may be taken of the size comparable to~$Q$. We may also iterate the Three Lattice Theorem and (at the cost of increment of~$s$) find new cubes~$Q^1,Q^2,\ldots, Q^s$ such that for any~$R\in \D(Q)$ there exists~$i =1,2,\ldots,s$ such that~$3^{d}R \in \D(Q^i)$. We apply the iterated Three Lattice Theorem to~$Q = [0,1]$ and obtain some cubes~$Q^1, Q^2,\ldots, Q^s$ such that for any~$Q_{n,j} \in \D([0,1]^d)$ there exists~$i = 1,2,\ldots, s$ such that~$3^d Q_{n,j}\in\D(Q^i)$.
\begin{proof}[Proof of Theorem~\ref{Main2}]
We apply Theorem~\ref{LocalMain2} and Lemma~\ref{EstimateByEnergyIncrement}:
\mlt{
\sum\limits_{n \geq 1}\Big|\int\limits_{\R^d} \Phi(K_n*f)\Big| \lesssim  \sum\limits_{n\geq 1}\sum\limits_{i=1}^s\sum\limits_{R\in \D_n(Q^i)} \sum\limits_{k\geq 0}(1-\eps)^k\Big(E_{R,k}[f] - E_{R,k+1}[f]\Big)\\ =
\sum\limits_{i=1}^s \sum\limits_{n \geq 1}\sum\limits_{k \geq 0}(1-\eps)^k\Big(E_{Q^i, n+k}[f] - E_{Q^i,n+k+1}[f]\Big)\\ =
\sum\limits_{i=1}^s\sum\limits_{m \geq 1}\bigg(\sum\limits_{k=0}^m(1-\eps)^k\bigg) \Big(E_{Q^i, m}[f] - E_{Q^i,m+1}[f]\Big)\\ \lesssim 
\sum\limits_{i=1}^s \sum\limits_{m \geq 1} \Big(E_{Q^i, m}[f] - E_{Q^i,m+1}[f]\Big) \Lseqref{Telescope} \|f\|_{L_1}^p.
}
\end{proof}
Though the proof of Theorem~\ref{Remainder} is lengthier than the proof above, it relies upon the same circle of ideas.
\begin{proof}[Proof of Theorem~\ref{Remainder}.]
We pick some large natural number~$N$ and use Lemma~\ref{Subadditive} below to split the sum on the left hand side
\mlt{\label{Subadd}
\int\limits_{\R^d}\Mp\Big(\big|K_{\leq n}*f(x)\big|, \big|K_{n+1}*f(x)\big|\Big)\,dx\\ \lesssim  \underbrace{\int_{\R^d}\Mp\Big(\big|K_{n-N\leq n}*f(x)\big|, \big|K_{n+1}*f(x)\big|\Big)\,dx}_{I_0^n} + \sum\limits_{m < n-N} \underbrace{\int\limits_{\R^d}\Mp\Big(\big|K_{m}*f(x)\big|, \big|K_{n+1}*f(x)\big|\Big)\,dx}_{I_m^n},
}
here we use the notation 
\eq{
K_{n-N\leq n} = \sum\limits_{k=n-N}^{n} K_k.
}
We will estimate the terms~$I_0^n$ and~$I_{m}^n$,~$m < n-N$, in slightly different ways.

\paragraph{Estimate of close terms.} We wish to prove the estimate
\eq{\label{LocalI0}
\int\limits_{Q_{n,j}}\Mp\Big(\big|K_{n-N\leq n}*f(x)\big|, \big|K_{n+1}*f(x)\big|\Big)\,dx \lesssim 2^n\|f\|_{L_1(3^dQ_{n,j})}^{p-1}\inf\limits_{c\in\R^d}\int\limits_{3^dQ_{n,j}} |x-c||f(x)|\,dx.
}
Once this is done, the desired inequality
\eq{
\sum\limits_{n\geq 0}I_0^n \lesssim \|f\|_{L_1}^p
}
will follow by application of the Three Lattice Theorem and Lemma~\ref{EstimateByEnergyIncrement} in the same way as in the proof of Theorem~\ref{Main2}.

Let us prove~\eqref{LocalI0}. Without loss of generality, we may assume~$n=0$ and~$\|f\|_{L_1(3^dQ_{0,j})} = 1$. Then, the quantities
\eq{
K_{-N\leq 0}*f(x)\quad \text{and}\quad K_{1}*f(x)
}
are bounded by~$O(1)$ for any~$x \in Q_{0,j}$. Thus, we may pick some~$c\in \R^d$ and rely upon the Lipschitz property of the function~$\Mp$ (Lemma~\ref{MpLip} below):
\mlt{
\int\limits_{Q_{0,j}}\Mp\Big(\big|K_{-N\leq 0}*f(x)\big|, |K_{1}*f(x)|\Big)\,dx \lesssim \int\limits_{Q_{0,j}}\Big|K_{-N\leq 0}*f(x) - K_{-N\leq 0}(x-c) \cdot \!\!\!\!\!\int\limits_{3^dQ_{0,j}}\!\!\!f\Big|\,dx\\ +
\int\limits_{Q_{0,j}}\Big|K_{1}*f(x) - K_{1}(x-c) \cdot\!\!\!\!\! \int\limits_{3^dQ_{0,j}}\!\!\! f\Big|\,dx + \int\limits_{Q_{0,j}}\Mp\Big(\big|K_{-N\leq 0}(x-c)\cdot\!\!\!\!\! \int\limits_{3^dQ_{0,j}}\!\!\!f\big|,\ \big|K_{1}(x-c)\cdot\!\!\!\!\! \int\limits_{3^dQ_{0,j}}\!\!\! f\big|\Big)\,dx.
}
The last term vanishes since the kernels~$K_{-N\leq 0}$ and~$K_1$ have disjoint supports. As for the first two terms, we may write the same estimate as~\eqref{LipschitzConvolution}, optimize in~$c$, and obtain~\eqref{LocalI0}.

\paragraph{Estimate for separated terms.} 
We will prove the estimate
\eq{\label{Locall1}
\int\limits_{Q_{m+N,j}}\Mp\Big(\big|K_{m}*f(x)\big|, \big|K_{n+1}*f(x)\big|\Big)\,dx \lesssim (1-\eps)^{n-m} \Big[E_{3^{d}Q_{m+N,j},0}[f] -E_{3^{d}Q_{m+N,j}, M}[f]\Big],
}
where~$\eps \in (0,1)$ and~$M\in \N$ are fixed numbers whose choice does not depend on~$f$,~$m$,~$n$, or~$j$. Recall~$n-m > N$. Similar to the proof of Theorem~\ref{Main2}, this inequality implies
\eq{
\sum\limits_{n\geq 0}\sum\limits_{m < n-N} I_{m}^n \lesssim \|f\|_{L_1}^p.
}
Let~$Q_{n,p}\subset Q_{m+N,j}$ be some other dyadic cube, then
\eq{
\big|K_{n+1}*f(x)\big|\lesssim 2^{n(d-\alpha)}\|f\|_{L_1(2Q_{n,p})},\qquad x\in Q_{n,p}
%\big|K_{m}*f(x)\big| \lesssim 2^{m(d-\alpha)}\!\!\!\!\sum\limits_{|p-q|\sim 2^{n-m}}\|f\|_{L_1(Q_{n,q})},\qquad &x\in Q_{n,p}.
}
The number~$N$ should be so large that for any~$x\in Q_{m+N,j}$, the support of the function~$K_m(\cdot-x)$ intersects neither the cube~$Q_{m+N,j}$ nor the cubes~$Q_{m+N,i}$ with~$|i-j| \leq 2\sqrt d$.  Recall that the function~$K_m$ vanishes inside the ball~$B_{2^{-m-1}}(0)$, so, this requirement may be obtained by choosing sufficiently large~$N$ ($N=5d+5$ is fine). See Figure~\ref{Fig1} for visualization. Therefore,
\eq{
\big|K_{m}*f(x)\big| \lesssim 2^{m(d-\alpha)}\!\!\!\!\sum\limits_{2\sqrt{d} \leq |i-j| \lesssim 1}\|f\|_{L_1(Q_{m+N,i})},\qquad x\in Q_{n,p}.
}
\begin{figure}
\includegraphics[width=0.8\textwidth]{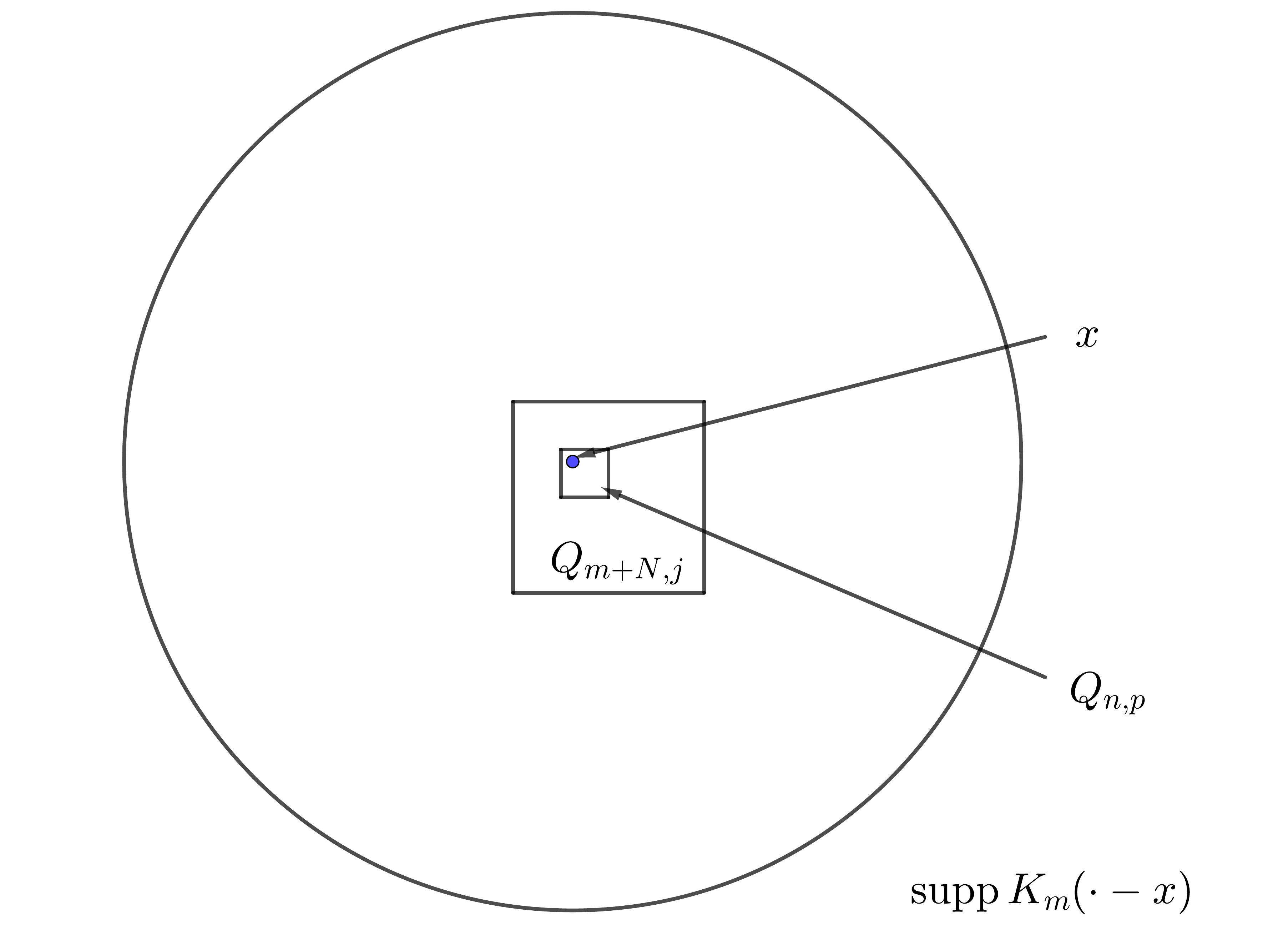}
\caption{The function~$K_m(\cdot - x)$ is supported outside~$Q_{m+N,j}$ and its neighbor cubes.}
\label{Fig1}
\end{figure}
Therefore,
\mlt{
\int\limits_{Q_{n,p}}\Mp\Big(\big|K_{m}*f(x)\big|,\big|K_{n+1}*f(x)\big|\Big)\,dx\\ 
\lesssim 2^{-nd}\Mp\Big(2^{m(d-\alpha)}\!\!\!\!\sum\limits_{2\sqrt{d} \leq |i-j| \lesssim 1}\|f\|_{L_1(Q_{m+N,i})},2^{n(d-\alpha)}\|f\|_{L_1(2Q_{n,p})}\Big).
}
We sum these estimates over all cubes~$Q_{n,p} \subset Q_{m+N,j}$ and use the concavity of the function~$y\mapsto \Mp(x,y)$ provided by Lemma~\ref{Convexity} below:
\mlt{\label{UnifiedCubes}
\int\limits_{Q_{m+N,j}}\Mp\Big(\big|K_{m}*f(x)\big|,\big|K_{n+1}*f(x)\big|\Big)\,dx\\ 
\lesssim 2^{-md}\cdot2^{(m-n)d}\!\!\!\!\!\!\sum\limits_{Q_{n,p}\subset Q_{m+N,j}}\!\!\!\!\!\Mp\Big(2^{m(d-\alpha)}\!\!\!\!\sum\limits_{2\sqrt{d} \leq |i-j| \lesssim 1}\|f\|_{L_1(Q_{m+N,i})},2^{n(d-\alpha)}\|f\|_{L_1(2Q_{n,p})}\Big)\\
\lesssim 2^{-md}\Mp\bigg(2^{m(d-\alpha)}\!\!\!\!\sum\limits_{2\sqrt{d} \leq |i-j| \lesssim 1}\|f\|_{L_1(Q_{m+N,i})},2^{(m-n)d+n(d-\alpha)}\!\!\!\!\!\sum\limits_{Q_{n,p}\subset Q_{m+N,j}}\|f\|_{L_1(2Q_{n,p})}\bigg)\\
\leq 2^{-md}\Mp\bigg(2^{m(d-\alpha)}\!\!\!\!\sum\limits_{2\sqrt{d} \leq |i-j| \lesssim 1}\|f\|_{L_1(Q_{m+N,i})},2^{(m-n)d+n(d-\alpha)}\|f\|_{L_1(2Q_{m+N,j})}\bigg).
}
We use the positive~$p$-homogeneity of the function~$\Mp$ and the identity~$d=(d-\alpha)p$ to rewrite the latter expression as
\eq{
\Mp\bigg(\sum\limits_{2\sqrt{d} \leq |i-j| \lesssim 1}\|f\|_{L_1(Q_{m+N,i})},2^{(m-n)\alpha}\|f\|_{L_1(2Q_{m+N,j})}\bigg).
}
It remains to notice that~$\Mp(x,\lambda y) \leq \lambda^{p-1}\Mp(x,y)$ when~$\lambda < 1$, and, thus, we finally arrive at
\mlt{
\int\limits_{Q_{m+N,j}}\Mp\Big(\big|K_{m}*f(x)\big|,\big|K_{n+1}*f(x)\big|\Big)\,dx\\
 \lesssim 2^{(m-n)(p-1)\alpha}\Mp\bigg(\sum\limits_{2\sqrt{d} \leq |i-j| \lesssim 1}\|f\|_{L_1(Q_{m+N,i})},\|f\|_{L_1(2Q_{m+N,j})}\bigg).
}
Therefore,~\eqref{Locall1} will be proved if we establish
\eq{\label{Wanted}
\Mp\bigg(\sum\limits_{2\sqrt{d} \leq |i-j| \lesssim 1}\|f\|_{L_1(Q_{m+N,i})},\|f\|_{L_1(2Q_{m+N,j})}\bigg) \lesssim\Big[E_{3^{d}Q_{m+N,j},0}[f] -E_{3^{d}Q_{m+N,j}, M}[f]\Big].
}
We notice that the sets
\eq{
\bigcup\limits_{2\sqrt{d} \leq |i-j| \lesssim 1} Q_{m+N,i}\qquad \text{and} \qquad 2Q_{m+N,j} 
}
are separated, and therefore, are covered by two disjoint subfamilies of the dyadic cubes in~$\D_{M}(3^dQ_{m+N,j})$, provided~$M$ is sufficiently large.  Thus,~\eqref{Wanted} follows from Lemma~\ref{EnergyBound2} below.
\end{proof}  

\section{Discussion}\label{SD}
\paragraph{Relation to martingale problem.}
Though we have not used the notions of conditional expectation, martingale, or the Bellman function in the proof, these concepts suggest what inequalities should be written and how the functions and the operators should be decomposed. We briefly comment on the analogies between the proof of Theorem~\ref{Main} and the martingale reasonings in~\cite{Stolyarov2021bis}. 

The main engine that controls everything is the collection of the quantities~$E_{Q,n}[f]$ and the telescopic sum~\eqref{Telescope} (similar things also play the major role in~\cite{Stolyarov2020}). In fact, we have several processes generated by the cubes~$Q^i$ provided by the Three Lattice Theorem. More precisely, each cube~$Q^i$ (let us assume for simplicity that~$\ell(Q^i) = 1$, which is not a restriction) and the function~$f$ generates a martingale by the formula
\eq{
F_{n}(x) = 2^{nd}\int\limits_{Q_{n,j}}|f(y)|\,dy,\qquad x\in Q_{n,j}\in \D_n(Q^i).
}
Then, the quanitity~$E_{Q,n}[f]$ may be expressed as
\eq{
E_{Q,n}[f] = 2^{-nd(p-1)}\E F_n^p.
}
This quantity somehow represents the part~$z^p$ of the supersolution from~\cite{Stolyarov2021bis} (see Theorem~$3.1$ therein). The main property of the quantities~$E_{Q,n}[f]$ we use is that the process~$2^{-nd(p-1)}\E F_n^p$ is a supermartingale.  The quantity~$\min(|y|^{p-1}z, |y|z^{p-1})$ from the same theorem is replaced with the quantities
\eq{
\Mp\Big(\big|K_{\leq n}*f(x)\big|, \big|K_{n+1}*f(x)\big|\Big).
}
Though the analogies are indirect here, the reader may compare the proofs in the present text and the proofs in~\cite{Stolyarov2021bis} and see that for each elementary inequality for Euclidean objects there is a similar inequality in the martingale world. 

We note that there are some differences between the discrete and continuous worlds. For example, there is no cancellation condition imposed on the kernel~$\tilde{K}$ itself in our reasonings. In~\cite{Stolyarov2021bis}, there was such a condition (see Definition~$2.2$ in that paper); though the condition appeared naturally in~\cite{Stolyarov2021bis}, its necessity was not proved.

\paragraph{The case $p > 2$.} The corrections to the proof in this case are also suggested by~\cite{Stolyarov2021bis}. The main idea is that in this case the function
\eq{
(x,y) \mapsto x^{p-1}y+xy^{p-1}
}
is locally Lipschitz. We replace~$\Mp(x,y)$ with this simpler function in all our reasonings. The only place in the argument where some changes are needed is the proof of Theorem~\ref{Remainder}. The function~$x\mapsto \Mp(x,y)$ is not subadditive anymore, so, a direct substitute for~\eqref{Subadd} is not allowed. One gets around this by allowing a tiny exponential multiple for~$I_{m}^n$ with the help of Lemma~$4.4$ in~\cite{Stolyarov2021bis} (see also the proof of Lemma~$5.3$ therein). Another difficulty comes from the fact that the function~$y\mapsto \Mp(x,y)$ is not concave anymore. However, this may be overcame by an application of  H\"older's inequality instead of Jensen's inequality in~\eqref{UnifiedCubes} (see the proof of Lemma~$5.3$ in~\cite{Stolyarov2021bis} for a similar computation).

\paragraph{Anisotropic things.} It would be interesting to extend Theorem~\ref{Main} to anisotropic setting. There is some evidence that the phenomenon of Bourgain--Brezis inequalities is present in this larger generality (see~\cite{KislyakovMaximov2018},~\cite{KMS2015}, and~\cite{Stolyarov2021} for some results in this direction). It is unclear how to transfer our methods to the anisotropic setting, because it lacks dyadic structure (in particular, what is the way to formulate the Three Lattice Theorem in the anisotropic setting?).

Let~$a\in \R^d$ be a vector with positive coordinates such that~$\sum_j a_j = d$. Consider the kernel~$K_a$ defined by the rule
\eq{
K_a\Big(t^{a_1}\zeta_1,t^{a_2}\zeta_2,\ldots ,t^{a_d}\zeta_d\Big) = t^{\alpha - d}\tilde{K}(\zeta),\qquad t\in \R_+,\zeta \in S^{d-1}.
}
Let as usual~$p=d/(d-\alpha)$ and let~$\Phi$ be a locally Lipschitz positively~$p$-homogeneous function.
\begin{Conj}
The inequality
\eq{
\Big|\int\limits_{\R^d} \Phi(K_a*f)\Big| \lesssim \|f\|_{L_1}^p,\qquad f\in C_0^\infty(\R^d), \ \int f=0,
}
holds true with a uniform constant if an only if
\eq{
\int\limits_{S^{d-1}} \Big(\sum\limits_{j=1}^d a_j\zeta_j^2\Big)\Phi(\tilde{K}(\zeta))\,d\zeta = 0\qquad\text{and}\qquad \int\limits_{S^{d-1}} \Big(\sum\limits_{j=1}^d a_j\zeta_j^2\Big)\Phi(-\tilde{K}(\zeta))\,d\zeta = 0.
}
\end{Conj}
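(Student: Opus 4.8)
The plan is to prove the conjecture by transporting the proof of Theorem~\ref{Main} to the anisotropic setting, replacing the isotropic dilations $x\mapsto tx$ by $\delta_t(x)=(t^{a_1}x_1,\dots,t^{a_d}x_d)$ and the Euclidean modulus by the gauge $\rho_a$ defined through $\rho_a(\delta_t\zeta)=t$ for $\zeta\in S^{d-1}$, $t>0$. The first point, and the one that explains the weight $\sum_j a_j\zeta_j^2$, is the anisotropic annular cancellation identity. Since $\det D\delta_t=t^{\sum_j a_j}=t^d$, the Jacobian of $(t,\zeta)\mapsto\delta_t\zeta$ equals $t^{d-1}\big(\sum_{j}a_j\zeta_j^2\big)$ times $d\sigma(\zeta)$, and $\Phi(K_a(\delta_t\zeta))=t^{(\alpha-d)p}\Phi(\tilde K(\zeta))=t^{-d}\Phi(\tilde K(\zeta))$ because $(\alpha-d)p=-d$; hence for any anisotropic annulus $A=\{x:1\le\rho_a(x)\le 2\}$,
\[
\int\limits_A \Phi(K_a(x))\,dx=\log 2\cdot\int\limits_{S^{d-1}}\Big(\sum\limits_{j=1}^d a_j\zeta_j^2\Big)\Phi(\tilde K(\zeta))\,d\sigma(\zeta).
\]
Plugging into the inequality a function $f$ mimicking $\pm\delta_0$ and rescaling isolates exactly the two weighted integrals, which proves necessity; the same identity shows that the anisotropic annular pieces of $K_a$ satisfy the analogue of~\eqref{IndCanc}.

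For sufficiency, set $K_{a,n}(x)=K_a(x)\chi_{\{2^{-n-1}\le\rho_a(x)\le 2^{-n}\}}$, so that $K_a=\sum_n K_{a,n}$, the pieces have disjoint supports, $K_{a,n}=2^{(d-\alpha)n}K_{a,0}\circ\delta_{2^n}$, and by the identity above $\int\Phi(\lambda K_{a,n}(\cdot-y))\,dx=0$ for all $y,\lambda,n$. The pointwise Lemma~\ref{SecondLemma} and the telescoping reduction deriving Theorem~\ref{Main} from Theorems~\ref{Main2}, \ref{Remainder} and Lemma~\ref{FirstLemma} are insensitive to the geometry and transfer verbatim. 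What must be redone is: an anisotropic Lemma~\ref{FirstLemma}, via a mean-value estimate for $K_{a,\le 0}$ in the gauge $\rho_a$ using that $\tilde K$ is Lipschitz on $S^{d-1}$ and $\int f=0$ (replacing Lemma~\ref{K1}); and, crucially, the median estimate Lemma~\ref{MedianLe}, which needs an anisotropic replacement of Lemma~\ref{K3}, i.e.\ a bound $\int_{\R^d}|K_{a,0}(x-z)-K_{a,0}(x-y)|\,dx\lesssim\rho_a(z-y)^{\theta}$ with the exponent $\theta$ forced by the homogeneity $d=(d-\alpha)p$ so that the scaling to general $n$ (Corollary~\ref{MedianCor}) still closes. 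Granting these, the dimension-by-dimension localization of Theorem~\ref{LocalMain2} and the close-term/separated-term split in the proof of Theorem~\ref{Remainder} should go through with cubes replaced by anisotropic dyadic boxes.

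The genuine obstacle is exactly the one flagged in the Discussion: $(\R^d,\rho_a)$ carries no honest dyadic grid when the $a_j$ are incommensurable, yet Lemma~\ref{EstimateByEnergyIncrement} and the Three Lattice Theorem are stated in terms of one. When the $a_j$ are rational one may rescale the exponents so that $\delta_2$ is an integer diagonal matrix and build genuine nested boxes; for general $a$ I would replace the dyadic cubes by Christ-type cubes adapted to the quasimetric $\rho_a$ and replace the Three Lattice Theorem by a finite family of adjacent dyadic systems (in the spirit of the Hyt\"onen--Kairema construction in metric measure spaces), checking that finitely many such systems suffice so that an $O(1)$-enlargement of a box in one system lies inside a box of another. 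With that framework in place, the greedy ``largest-mass child'' argument of Lemma~\ref{EstimateByEnergyIncrement} is purely combinatorial and survives, with $\ell(Q)$ replaced by the $\rho_a$-diameter of the box and the factor $2^n$ tracked through $d=(d-\alpha)p$, and the energies $E_{Q,n}[f]$ keep their supermartingale monotonicity. I expect the rest to be essentially routine once this anisotropic dyadic/Three-Lattice package and the two anisotropic kernel-regularity lemmas are established; the most delicate bookkeeping is to ensure that the weight $\sum_j a_j\zeta_j^2$ produced by the annular identity above is consistently the one that makes every piece $K_{a,n}$ cancel, so that the analogue of~\eqref{IndCanc} is available everywhere it is used.
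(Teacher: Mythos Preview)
The statement you are attempting to prove is a \emph{Conjecture} in the paper; the paper provides no proof and explicitly flags the anisotropic setting as open precisely because ``it lacks dyadic structure (in particular, what is the way to formulate the Three Lattice Theorem in the anisotropic setting?)''. So there is nothing in the paper to compare your argument against on the sufficiency side.

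Your computation of the Jacobian of anisotropic polar coordinates $(t,\zeta)\mapsto\delta_t\zeta$, giving $t^{d-1}\sum_j a_j\zeta_j^2\,dt\,d\sigma(\zeta)$, is correct and is a genuinely useful observation: it explains why the weight $\sum_j a_j\zeta_j^2$ appears in the cancellation conditions and makes the necessity direction, as well as the anisotropic analogue of~\eqref{IndCanc}, essentially clear. This part is fine.

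The sufficiency direction, however, is not a proof but a programme with the hard steps deferred. You yourself write ``should go through'', ``I would replace'', and ``I expect the rest to be essentially routine once this anisotropic dyadic/Three-Lattice package and the two anisotropic kernel-regularity lemmas are established''. Those are exactly the missing ingredients, and they are the reason the paper leaves this as a conjecture. Invoking Christ cubes and Hyt\"onen--Kairema adjacent systems is a reasonable strategy, and $(\R^d,\rho_a,dx)$ is indeed a space of homogeneous type, but you have not verified that the specific features used here survive: the greedy argument in Lemma~\ref{EstimateByEnergyIncrement} needs a comparison between the $\rho_a$-diameter of a Christ cube and $2^{-n}$ that matches the scaling in Corollary~\ref{MedianCor}; the induction in Theorem~\ref{LocalMain2} is coordinate-by-coordinate and has no obvious analogue for Christ cubes; and your anisotropic Lemma~\ref{K3} replacement requires a modulus of continuity for translations measured by $\rho_a(z-y)$, yet $\rho_a$ is only a quasi-norm and translations do not interact with $\delta_t$ as cleanly as in the isotropic case. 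Until these points are actually carried out, what you have is a plausible outline, not a proof of the conjecture.
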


\section{Auxiliary inequalities}\label{SAI}
\begin{Le}\label{K1}
The inequality
\eq{
\Big| K_{\leq 0}(x-y) - K_{\leq 0}(x)\Big| \lesssim \frac{|y|}{|x|^{d-\alpha+1}},\quad |x| \geq 2, |y| \leq 1,
}
holds true with a uniform constant.
\end{Le}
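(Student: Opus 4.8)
The plan is to split $K_{\leq 0}$ as $K_{\leq 0} = K_0 + K_{\leq -1}$ and estimate the increment of each piece separately; in fact it is cleanest to treat the whole tail sum at once by exploiting homogeneity. Recall $K_{\leq 0}(x) = \sum_{k \leq 0} K_k(x)$, and $K_k(x) = K(x)$ exactly on the annulus $2^{-k-1} \leq |x| \leq 2^{-k}$; summing, $K_{\leq 0}(x) = K(x)$ for $|x| \leq 1$ and $K_{\leq 0}(x) = 0$ for $|x| > 1$, so in fact $K_{\leq 0}$ is just the truncation $K \cdot \chi_{\{|x| \leq 1\}}$. Since we are in the regime $|x| \geq 2$, $|y| \leq 1$, both $x$ and $x - y$ have norm at least $1$, and the segment $[x-y, x]$ stays in the region $\{|z| \geq 1\}$, so along that segment $K_{\leq 0}$ agrees with $K(z) = |z|^{\alpha-d}\tilde K(z/|z|)$ — wait, not quite: $x$ itself has $|x| \geq 2 > 1$, so $K_{\leq 0}(x) = 0$. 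So actually the statement to prove collapses: for $|x| \geq 2$ we have $K_{\leq 0}(x) = 0$, and $K_{\leq 0}(x-y) = 0$ as well whenever $|x - y| \geq 1$, which holds since $|x-y| \geq |x| - |y| \geq 2 - 1 = 1$. Hence the left-hand side is identically $0$ in this range and the inequality is trivial.

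This makes me suspect the intended range of the lemma is different (perhaps $1 \leq |x| \leq 2$, or the lemma is stated for $K$ rather than $K_{\leq 0}$, or for use at a different scale), so let me instead describe the proof of the genuinely nontrivial estimate that is actually invoked in the proof of Lemma \ref{FirstLemma}, namely a Lipschitz-type bound on the kernel away from the origin. For a general homogeneous kernel $K(z) = |z|^{\alpha-d}\tilde K(z/|z|)$ with $\tilde K$ Lipschitz on $S^{d-1}$, one has $|\nabla K(z)| \lesssim |z|^{\alpha - d - 1}$ away from the origin, because differentiating the product $|z|^{\alpha-d} \cdot \tilde K(z/|z|)$ produces one term of size $|z|^{\alpha-d-1}|\tilde K(z/|z|)| \lesssim |z|^{\alpha-d-1}$ and one term of size $|z|^{\alpha-d} \cdot |z|^{-1}\|\tilde K\|_{\Lip} \lesssim |z|^{\alpha-d-1}$, using that $z \mapsto z/|z|$ has gradient of size $|z|^{-1}$ and $\tilde K$ is Lipschitz. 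Then, for $|x| \geq 2$ and $|y| \leq 1$ the whole segment $[x - y, x]$ lies in $\{|z| \geq 1\}$ where $K$ is $C^1$, so by the mean value theorem
\eq{
|K(x - y) - K(x)| \leq |y| \sup_{z \in [x-y,x]} |\nabla K(z)| \lesssim |y| \sup_{z \in [x-y,x]} |z|^{\alpha - d - 1} \lesssim \frac{|y|}{|x|^{d - \alpha + 1}},
}
where the last step uses that $|z| \geq |x| - |y| \geq |x|/2$ on the segment, together with $\alpha - d - 1 < 0$.

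The only subtlety, and the step I would be most careful about, is the regularity of $\tilde K$: the hypothesis is merely that $\tilde K$ is Lipschitz, not $C^1$, so $\nabla K$ exists only almost everywhere and the mean value theorem must be replaced by the fundamental theorem of calculus for Lipschitz functions along the segment (i.e., $K$ is locally Lipschitz on $\R^d \setminus \{0\}$ with the stated gradient bound a.e., and $K(x) - K(x-y) = \int_0^1 \nabla K(x - y + ty) \cdot y\, dt$), which is legitimate since $K$ restricted to a neighborhood of the compact segment is Lipschitz. Everything else — the homogeneity bookkeeping, the bound $|z| \geq |x|/2$ on the segment, the sign of the exponent — is routine. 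If instead the paper really does intend the literal statement with $K_{\leq 0}$, then as observed above the inequality is vacuous in the stated range, or holds trivially after checking the two-line support computation; in any case the substantive content is the homogeneous-kernel Lipschitz estimate displayed above, which is exactly what feeds into the proof of Lemma \ref{FirstLemma} through the term $K_{\leq 0}(x - y) - K_{\leq 0}(x)$ with $y$ in the (bounded) support of $f$ and $x$ far away.
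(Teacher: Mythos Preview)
You miscomputed the support of $K_{\leq 0}$. For $k\leq 0$ the annulus $2^{-k-1}\leq |x|\leq 2^{-k}$ lies at distance $\geq 1/2$ from the origin (e.g.\ $k=-1$ gives $1\leq |x|\leq 2$), so
\[
K_{\leq 0}(x)=\sum_{k\leq 0}K_k(x)=K(x)\ \text{for }|x|\geq \tfrac12,\qquad K_{\leq 0}(x)=0\ \text{for }|x|<\tfrac12.
\]
In the regime $|x|\geq 2$, $|y|\leq 1$ both $x$ and $x-y$ have norm $\geq 1\geq 1/2$, so $K_{\leq 0}(x-y)-K_{\leq 0}(x)=K(x-y)-K(x)$, which is \emph{not} identically zero. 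Your ``the inequality is vacuous'' conclusion is therefore wrong, and so is the suspicion that the lemma is misstated.

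That said, the ``nontrivial'' estimate you then prove for $K$ is exactly the content of the lemma once the support of $K_{\leq 0}$ is identified correctly, and your argument via the a.e.\ gradient bound $|\nabla K(z)|\lesssim |z|^{\alpha-d-1}$ together with the fundamental theorem of calculus along the segment is correct and essentially equivalent to the paper's proof. The paper simply factors out the homogeneity directly,
\[
|K(x-y)-K(x)|=|x|^{\alpha-d}\big|K(x/|x|-y/|x|)-K(x/|x|)\big|\lesssim |x|^{\alpha-d}\cdot\frac{|y|}{|x|},
\]
using that $K$ is locally Lipschitz on a fixed annulus; this is the same Lipschitz information you encode through $\nabla K$. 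So fix the support computation in your first paragraph and your second paragraph is the proof.
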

\begin{proof}
Since~$|x| \geq 2$ and~$|x-y|\geq 1$, we have
\mlt{
\Big| K_{\leq 0}(x-y) - K_{\leq 0}(x)\Big| = \Big|K(x-y) - K(x)\Big|\\ = |x|^{\alpha - d}\Big|K(x/|x|-y/|x|) - K(x/|x|)\Big| \lesssim |x|^{\alpha - d}\frac{|y|}{|x|} = \frac{|y|}{|x|^{d-\alpha+1}}.
}
We have used that~$K$ is homogeneous of order~$\alpha - d$ and locally Lipschitz outside the origin (note that~$|y|/|x| \leq \frac12$ in our case).
\end{proof}
\begin{Le}\label{K2}
Let~$p=2$. The inequality
\eq{
|K_{\leq n}|* |K_{n+1}| (x) \lesssim 2^n|x|(1+2^n|x|)^{-\frac{d}{2}-1}.
}
holds true\textup; the constant is uniform with respect to~$x\in \R^d$ and~$n \in \Z$.
\end{Le}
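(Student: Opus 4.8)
The plan is to reduce to the case $n=0$ by the scaling relation already recorded in the paper, and then to estimate the fixed convolution $|K_{\le 0}|*|K_1|$ separately in three ranges of its argument. Since $p=2$ forces $\alpha=d/2$, the kernel $K$ is homogeneous of degree $-d/2$; consequently $K_{\le n}(w)=2^{nd/2}K_{\le 0}(2^nw)$ and $K_{n+1}(w)=2^{nd/2}K_1(2^nw)$. Substituting $u=2^ny$ in the convolution integral gives $|K_{\le n}|*|K_{n+1}|(x)=\bigl(|K_{\le 0}|*|K_1|\bigr)(2^nx)$, so, writing $v=2^nx$, it suffices to prove
\[
\bigl(|K_{\le 0}|*|K_1|\bigr)(v)\lesssim |v|\,(1+|v|)^{-\frac d2-1}.
\]

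Next I would record the elementary pointwise bounds coming from the homogeneity of $K$ and the boundedness of $\tilde K$: $|K_1(u)|\lesssim 1$ with $K_1$ supported in the annulus $A=\{1/4\le|u|\le 1/2\}$, while $|K_{\le 0}(w)|=|K(w)|\lesssim|w|^{-d/2}$ on its support $\{|w|\ge 1/2\}$. Plugging these in,
\[
\bigl(|K_{\le 0}|*|K_1|\bigr)(v)\lesssim\int_{A\cap\{|v-u|\ge 1/2\}}|v-u|^{-d/2}\,du.
\]
The disjointness of the supports of $K_{\le 0}$ and $K_1$ is exactly what forces the constraint $|v-u|\ge 1/2$ on the domain of integration, and this is the crucial structural input.

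Finally I would split into three regimes of $|v|$. For $|v|\le 1/4$: by the triangle inequality $B\!\left(0,\tfrac12-|v|\right)\subset B\!\left(v,\tfrac12\right)$, so the domain $\{|u|\le 1/2\}\cap\{|v-u|\ge 1/2\}$ is contained in the thin spherical shell $\{\tfrac12-|v|\le|u|\le\tfrac12\}$, of measure $\lesssim|v|$, on which $|v-u|^{-d/2}\sim 1$; hence the convolution is $\lesssim|v|$, which matches the right-hand side since $(1+|v|)^{-d/2-1}\sim 1$ there. For $1/4\le|v|\le 2$: the integral runs over a bounded set and the singularity $|v-u|^{-d/2}$ is integrable because $d/2<d$, so the convolution is $\lesssim 1\sim|v|(1+|v|)^{-d/2-1}$. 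For $|v|\ge 2$: on $A$ one has $|v-u|\sim|v|$, so the convolution is $\lesssim|v|^{-d/2}\,|A|\lesssim|v|^{-d/2}\sim|v|(1+|v|)^{-d/2-1}$.

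The only genuinely delicate point is the small-$|v|$ regime: one must see that combining $|v-u|\ge 1/2$ with $|u|\le 1/2$ confines $u$ to a set of measure $O(|v|)$, and this is precisely where the disjointness of the supports of $K_{\le n}$ and $K_{n+1}$ enters. The other two regimes are routine size estimates that make no further use of cancellation or disjointness.
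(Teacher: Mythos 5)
Your proof is correct, and it follows the paper's reduction (the dilation/scaling step to $n=0$, valid precisely because $\alpha=d/2$ when $p=2$, and the size estimate $|K_{\le 0}|\lesssim|\cdot|^{-d/2}$ against the bounded, compactly supported $K_1$ for large argument). Where you diverge is in the delicate small-argument regime: the paper proves the bound $\lesssim|v|$ near the origin by observing that $|K_{\le 0}|*|K_1|$ is a Lipschitz function (bounded function convolved with a function of bounded variation) which vanishes at $v=0$ because the supports of $K_{\le 0}$ and $K_1$ are disjoint, whereas you exploit the same disjointness directly at the level of the integral: the constraints $|u|\le\frac12$ and $|v-u|\ge\frac12$ confine $u$ to a spherical shell of thickness $|v|$, giving the factor $|v|$ as a measure estimate. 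Your route is slightly more elementary (no appeal to the BV--Lipschitz convolution fact) and makes the role of the disjoint supports completely explicit pointwise; the paper's route is a one-line soft argument that generalizes easily whenever one knows regularity of the convolution plus vanishing at a point. Either argument establishes the lemma with constants uniform in $n$, since the scaling step is exact.
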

\begin{proof}
This inequality is dilation invariant, therefore, it suffices to consider the case~$n=0$:
\eq{
|K_{\leq 0}|* |K_{1}| (x) \lesssim |x|(1+|x|)^{-\frac{d}{2}-1}.
}
Recall~$\alpha = d/2$ in our case. The estimate
\eq{
|K_{\leq 0}|* |K_{1}| (x) \lesssim |x|^{-\frac{d}{2}},\quad |x|\geq 1,
}
is a consequence of the fact that~$K_1$ is a bounded function supported in~$B_\frac12(0)$ and
\eq{
\Big|K_{\leq 0}(x)\Big| \lesssim |x|^{-\frac{d}{2}}.
}
The estimate
\eq{
|K_{\leq 0}|* |K_{1}| (x) \lesssim |x|,\quad |x|\leq 1,
}
may be derived from the fact that the function on the left hand side is Lipschitz (since~$|K_{\leq 0}|$ is uniformly bounded and~$|K_1|$ is a function of bounded variation) and the fact that
\eq{
|K_{\leq 0}|*|K_1|(0) = \int\limits|K_{\leq 0}(x)||K_1(-x)|\,dx = 0
}
since~$K_{\leq 0}$ and~$K_1$ are supported outside~$B_{\frac12}(0)$ and inside~$B_{\frac12}(0)$, respectively.
\end{proof}
\begin{Le}\label{K3}
For any~$z,y\in\R^d$\textup, the inequality
\eq{
\int\limits_{\R^d}\Big|K_0(x-z) - K_0(x-y)\Big|\,dx \lesssim |z-y|
}
holds true with uniform constants.
\end{Le}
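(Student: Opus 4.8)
The plan is to reduce the global $L_1$ estimate to a dyadic decomposition in the distance $|x-y|$ relative to $r := |z-y|$, and to exploit the homogeneity and local Lipschitz property of $K_0$ (equivalently, of $K$ restricted to the annulus $\{2^{-1}\le |x|\le 1\}$) together with the boundedness and compact annular support of $K_0$. By translation invariance we may assume $y = 0$; write $z$ for the displacement, $r = |z|$, and note $K_0$ is supported in $\overline{B_1(0)}\setminus B_{1/2}(0)$, is bounded, and is Lipschitz on a neighborhood of this annulus (with constant $\lesssim 1$). The integrand $|K_0(x-z)-K_0(x)|$ is supported in the union of two annuli of inner radius $\sim\frac12$ and thickness $O(1)$, so the whole integral is automatically over a region of bounded measure; the only issue is to get the linear gain in $r$ rather than the trivial bound $O(1)$.

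First I would dispose of the easy regime $r \ge c$ for a small absolute constant $c$: there the desired bound $\lesssim |z-y| = r$ is weaker than the trivial bound $\|K_0(\cdot-z)-K_0\|_{L_1}\lesssim \|K_0\|_{L_1}\lesssim 1 \lesssim r$, so nothing to prove. Thus assume $r$ small, say $r \le \frac18$. Now split the domain of integration according to how far $x$ is from the two relevant spheres $|x| = \frac12$ and $|x-z| = \frac12$ (and $|x|=1$, $|x-z|=1$; these behave identically). On the ``bulk'' set where $x$ lies in both annuli and is at distance $\ge r$ from all four spheres, both $K_0(x)$ and $K_0(x-z)$ are given by the smooth homogeneous formula $|x|^{\alpha-d}\tilde K(x/|x|)$ evaluated at points within $r$ of each other, all bounded away from $0$, so $|K_0(x-z)-K_0(x)|\lesssim r$; integrating over a set of measure $O(1)$ gives the contribution $\lesssim r$. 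On the ``boundary layer'' set --- points within distance $r$ of one of the four spheres, where $K_0$ may jump --- we only have the pointwise bound $|K_0(x-z)-K_0(x)|\lesssim 1$, but this layer has measure $\lesssim r$ (four spherical shells of radius $\sim 1$ and thickness $\sim r$), so this contribution is also $\lesssim r$. Finally, on the set where $x$ lies in one annulus but not the other, one term vanishes and the other is $O(1)$; but this set is contained in the symmetric difference of the two annuli, which again has measure $\lesssim r$ since translating an annulus of radii $\frac12, 1$ by $z$ with $|z|=r$ changes it by a set of measure $\lesssim r$. Summing the three (or four) contributions yields the claimed estimate.

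The main obstacle is purely bookkeeping: carving the annuli into the ``interior'' piece (where the Lipschitz estimate applies) and the ``boundary shells'' of width $\sim r$ (where only the crude bound is available), and checking in each case that the product of the pointwise bound and the measure of the piece is $\lesssim r$. There is no analytic subtlety --- the kernel is bounded, compactly supported, and piecewise Lipschitz --- so once the decomposition is set up the estimates are immediate; I would present it as the two-line argument ``Lipschitz gain on the bulk, small measure on the boundary layer'' rather than tracking constants.
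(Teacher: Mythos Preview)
Your proposal is correct and follows essentially the same approach as the paper: reduce to $y=0$ (the paper sets $z=0$ instead), handle large displacement trivially, and for small displacement split into the set where both $x$ and $x-z$ lie in the annulus (where the local Lipschitz property of $K$ gives the pointwise bound $\lesssim r$) and the set where they lie in different cases (which is contained in an $r$-neighborhood of the boundary spheres and hence has measure $\lesssim r$). The paper's write-up is slightly leaner---it observes directly that ``different cases'' forces $x$ to be $|y|$-close to one of the two spheres, so there is no need for your separate ``boundary layer'' and ``symmetric difference'' pieces---but the content is the same.
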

\begin{Rem}
The kernel~$K_0$ has compact support. If it were Lipschitz\textup, then the lemma would be trivial. However\textup, the kernel~$K_0$ has jumps on the spheres~$|x| = 1$ and~$|x| = \frac12$\textup, and the proof gets slightly more complicated.
\end{Rem}
\begin{proof}[Proof of Lemma~\ref{K3}]
Without loss of generality, we may assume~$z=0$ and~$|y|\leq 1/10$. The kernel~$K_0$ is defined by cases in~\eqref{Kn}. If both~$x-y$ and $x$ fall under the same case, then the bound
\eq{
\Big|K_0(x) - K_0(x-y)\Big| \lesssim |y|
}
holds true since~$\tilde{K}$ is a locally Lipschitz function. If the choice of the points~$x-y$ and~$x$ in~\eqref{Kn} leads to different cases, then~$x$ is~$y$-close either to the unit sphere or to the sphere of radius~$\frac12$ centered at the origin. Thus,
\eq{
\int\limits_{\R^d}\Big|K_0(x) - K_0(x-y)\Big|\,dx \lesssim |y| +\!\!\!\!\! \int\limits_{||x| - \frac12| \leq |y|}\!\!\!\!\! dx + \!\!\!\!\!\int\limits_{||x| - 1| \leq |y|}\!\!\!\!\!dx \lesssim |y|.
}
\end{proof}
\begin{Le}\label{Phi1}
For any~$a, b \in \R^\ell$ such that~$|b| \leq 2|a|$\textup, we have
\eq{
\Big|\Phi(a+b) - \Phi(a)\Big| \lesssim |a|^{p-1}|b|.
}
\end{Le}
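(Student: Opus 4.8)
The plan is to exploit the positive $p$-homogeneity of $\Phi$ to rescale the two points involved, so that the whole estimate reduces to a single Lipschitz bound on a fixed compact subset of $\R^\ell$, where the local Lipschitz hypothesis supplies a uniform constant.

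First I would dispose of the degenerate case $a = 0$ separately: then $|b| \le 2|a| = 0$ forces $b = 0$, while $\Phi(0) = 0$ because $\Phi(0) = \Phi(t\cdot 0) = t^p\Phi(0)$ for every $t > 0$ and $p > 1$; hence both sides of the claimed inequality vanish.

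Now assume $a \ne 0$ and put $r = |a|$, $a' = a/r$, $b' = b/r$, so that $|a'| = 1$ and $|b'| = |b|/r \le 2$. Both $a'$ and $a' + b'$ lie in the closed ball $\overline{B_3(0)}$, since $|a' + b'| \le |a'| + |b'| \le 3$. Since $\Phi$ is locally Lipschitz on $\R^\ell$, it is Lipschitz on the compact set $\overline{B_3(0)}$; let $L = \Lip(\Phi, \overline{B_3(0)})$ be its Lipschitz constant there, a finite quantity independent of $a$ and $b$. Applying the $p$-homogeneity of $\Phi$ twice, I get
\[
\big|\Phi(a + b) - \Phi(a)\big| = r^p\,\big|\Phi(a' + b') - \Phi(a')\big| \le r^p L\,|b'| = r^p L\,\frac{|b|}{r} = L\,|a|^{p-1}|b|,
\]
which is exactly the assertion, with implied constant $L$.

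There is essentially no serious obstacle here; the only point worth spelling out is the passage from "locally Lipschitz" to "Lipschitz on a fixed compact set", which is standard — cover $\overline{B_3(0)}$ by finitely many balls on which $\Phi$ is Lipschitz, use the Lebesgue number of the cover, and control jumps over larger distances by the fact that the continuous function $\Phi$ is bounded on the compact set $\overline{B_3(0)}$. One should also note that the radius $3$ is dictated precisely by the hypothesis $|b| \le 2|a|$; any linear relation $|b| \le C|a|$ would work just as well, with $\overline{B_{1+C}(0)}$ in place of $\overline{B_3(0)}$.
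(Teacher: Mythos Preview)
Your proof is correct and follows exactly the same approach as the paper: rescale by $|a|$ using the $p$-homogeneity, then apply the local Lipschitz property on a fixed compact ball. The paper's proof is just a one-line version of what you wrote, omitting the $a=0$ case and the explicit discussion of the compact set.
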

\begin{proof}
We use the positive~$p$-homogeneity and the local Lipschitz property of~$\Phi$:
\eq{
\Big|\Phi(a+b) - \Phi(a)\Big| = |a|^p\Big|\Phi(a/|a|+b/|a|) - \Phi(a/|a|)\Big| \lesssim |a|^p\cdot \frac{|b|}{|a|} = |a|^{p-1}|b|.
}
\end{proof}
\begin{Le}\label{EnergyBound}
Let~$n \in \N$ and~$p \in (1,\infty)$. There exists a constant~$C = C(p,n) > 0$ such that
\eq{
\Big(\sum\limits_{j=1}^nz_j\Big)^p - \sum\limits_{j=1}^nz_j^p \geq C \Big(\sum\limits_{j=1}^n z_j\Big)^{p-1}\min_{j\in [1 .. n ]}\sum\limits_{i\ne j}z_i.
}
holds true for any choice of non-negative numbers~$z_1,z_2,\ldots,z_n$.
\end{Le}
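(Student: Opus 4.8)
The plan is to use homogeneity to pass to the simplex and then argue by an elementary case split.

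First I would note that both sides are positively $p$-homogeneous in $(z_1,\dots,z_n)$, so, discarding the trivial case where all $z_j$ vanish, I may normalize $\sum_{j=1}^n z_j = 1$. Writing $M := \max_j z_j$, the quantity $\min_{j}\sum_{i\ne j}z_i$ equals $1-M$, so the assertion reduces to finding $C=C(p,n)>0$ with $1-\sum_j z_j^p \ge C(1-M)$ on the standard simplex. The only elementary ingredient needed is that $t^p\le t$ for $t\in[0,1]$, $p>1$; from it one gets $\sum_j z_j^p\le\sum_j z_j=1$ (so the left-hand side is $\ge 0$) and, more precisely, $\sum_j z_j^p\le M^{p-1}\sum_j z_j=M^{p-1}$.

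Then I would split into two cases. If $M\le\frac12$, then $\sum_j z_j^p\le M^{p-1}\le 2^{1-p}$, hence $1-\sum_j z_j^p\ge 1-2^{1-p}\ge(1-2^{1-p})(1-M)$. If $M>\frac12$, let $z_k=M$ — the index is unique, since two entries exceeding $\frac12$ would sum to more than $1$ — and put $s:=1-M=\sum_{j\ne k}z_j\in[0,\tfrac12)$. Using $z_k^p=(1-s)^p\le 1-s$ and the crude bound $\sum_{j\ne k}z_j^p\le\bigl(\max_{j\ne k}z_j\bigr)^{p-1}\sum_{j\ne k}z_j\le s^{p-1}\cdot s=s^p$, I obtain
\[
1-\sum_j z_j^p=(1-z_k^p)-\sum_{j\ne k}z_j^p\ge s-s^p=s\,(1-s^{p-1})\ge(1-2^{1-p})\,s=(1-2^{1-p})(1-M),
\]
the last inequality because $s<\tfrac12$ forces $s^{p-1}\le 2^{1-p}$. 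Combining the cases, the lemma holds with $C=1-2^{1-p}>0$ (which in fact does not depend on $n$).

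I do not expect a genuine obstacle: the only delicate point is the behaviour near the vertices of the simplex, where $M\to1$ and both sides vanish, and one must match the rates of decay. This is precisely what the estimate $z_k^p=(1-s)^p\le 1-s$ supplies in the case $M>\frac12$, the point being that the dominant coordinate $z_k$ contributes a term of order $s=1-M$ to $1-\sum_j z_j^p$ while the remaining coordinates contribute only $O(s^p)$.
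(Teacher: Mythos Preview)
Your proof is correct. Both you and the paper normalize to the simplex and bound the left-hand side through the maximal coordinate~$M$; the paper then uses the cruder lower bound $1-\sum_j z_j^p\ge 1-M^p-(1-M)^p$ together with the concavity of $t\mapsto 1-t^p-(1-t)^p$ on $[0,1]$, which forces the restriction $M\ge n^{-1}$ and hence an $n$-dependent constant. Your case split at $M=\tfrac12$, using $\sum_j z_j^p\le M^{p-1}$ when $M\le\tfrac12$ and the sharper estimate $M^p\le M$ (rather than keeping $M^p$) when $M>\tfrac12$, avoids this restriction and yields the explicit constant $C=1-2^{1-p}$ independent of~$n$, which is a small improvement over the paper.
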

\begin{proof}
Without loss of generality, let~$\sum_j z_j=1$. Let also~$z_1$ be the maximal of the~$z_j$; in particular,~$z_1 \in [n^{-1},1]$. Then, the left hand side is bounded from below by
\eq{
1 - \sum\limits_{j=1}^nz_j^p \geq  1- z_1^p - (1-z_1)^p,
}
and we are left with proving
\eq{
1-z_1^p - (1-z_1)^p \gtrsim 1-z_1,\qquad z_1 \in [n^{-1},1].
}
This inequality is true since the left hand side is a concave function that has zeros at~$z_1=0$ and~$z_1=1$.
\end{proof}
\begin{Le}\label{EnergyBound2}
Let~$n\in\N$ and~$p\in (1,\infty)$. There exists a constant~$C = C(p,n) > 0$ such that for any proper subset~$A \in [1.. n]$\textup, the inequality
\eq{
\Big(\sum\limits_{j=1}^nz_j\Big)^p - \sum\limits_{j=1}^nz_j^p \geq C\Mp\Big(\sum\limits_{j\in A}z_j,\sum\limits_{j\notin A}z_j\Big)
}
holds true for any choice of non-negative numbers~$z_1,z_2,\ldots,z_n$.
\end{Le}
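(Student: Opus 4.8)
The plan is to reduce the assertion to Lemma~\ref{EnergyBound}, which already handles the ``worst coordinate removed'' quantity, by splitting into two cases according to which of the two masses $S_A = \sum_{j\in A}z_j$ and $S_{A^c}=\sum_{j\notin A}z_j$ is dominant, and in each case to bound $\Mp$ by a quantity of the shape appearing on the right of Lemma~\ref{EnergyBound}. Without loss of generality normalize $\sum_j z_j=1$, so $S_A+S_{A^c}=1$ and $\Mp(S_A,S_{A^c})=\min(S_A^{p-1}S_{A^c},S_A S_{A^c}^{p-1})$. Since $A$ is a proper nonempty subset (if $A$ or $A^c$ is empty the right-hand side is $0$ and there is nothing to prove), both $S_A>0$ and $S_{A^c}>0$.

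First I would observe that it suffices to prove
\[
1-\sum_{j=1}^n z_j^p \;\gtrsim\; \min(S_A^{p-1}S_{A^c},\,S_A S_{A^c}^{p-1}),
\]
with a constant depending only on $p$ and $n$. Group the coordinates: let $g(t)=t^p$ be convex, and note that by a two-block comparison one has $\sum_{j\in A}z_j^p \le S_A \cdot (\max_{j\in A}z_j)^{p-1}$ and a matching lower bound $\sum_{j\in A}z_j^p \ge n^{-(p-1)}S_A^p$ when $|A|\le n$, and similarly for $A^c$. A cleaner route: write $1-\sum_j z_j^p = \big(1 - S_A^p - S_{A^c}^p\big) + \big(S_A^p - \sum_{j\in A}z_j^p\big) + \big(S_{A^c}^p-\sum_{j\notin A}z_j^p\big)$, where each of the last two brackets is nonnegative by superadditivity of $t\mapsto t^p$ for $p>1$. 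The first bracket $1-S_A^p-S_{A^c}^p$, viewed as a function of $S_A\in(0,1)$ with $S_{A^c}=1-S_A$, is concave, vanishes at the endpoints, hence is $\gtrsim_p S_A S_{A^c}\ge S_A S_{A^c}\cdot \min(S_A,S_{A^c})^{p-2}\cdot\min(S_A,S_{A^c})^{2-p}$; more directly $1-S_A^p-S_{A^c}^p \gtrsim_p \min(S_A,S_{A^c})$ since near either endpoint the expression behaves linearly in the small mass. Since $\min(S_A^{p-1}S_{A^c},S_A S_{A^c}^{p-1}) = \min(S_A,S_{A^c})\cdot\max(S_A,S_{A^c})^{p-1}\le \min(S_A,S_{A^c})$ for $p>1$ and $\max(S_A,S_{A^c})\le 1$, we already get
\[
1-\sum_{j=1}^n z_j^p \;\ge\; 1-S_A^p-S_{A^c}^p \;\gtrsim_p\; \min(S_A,S_{A^c}) \;\ge\; \Mp(S_A,S_{A^c}),
\]
which is exactly the claim.

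The only point requiring care is the elementary inequality $1-s^p-(1-s)^p\gtrsim_p \min(s,1-s)$ on $[0,1]$: the function $\phi(s)=1-s^p-(1-s)^p$ is concave (its second derivative is $-p(p-1)(s^{p-2}+(1-s)^{p-2})<0$ for $p>1$), equals $0$ at $s=0$ and $s=1$, and has $\phi'(0^+)=p>0$, $\phi'(1^-)=-p<0$ when $p<2$, while for $p\ge 2$ the one-sided derivatives are still positive at $0$ and negative at $1$; hence by concavity $\phi(s)\ge \phi(1/2)\cdot 2\min(s,1-s)$, or one simply notes $\phi(s)\ge \min(\phi'(0^+),-\phi'(1^-))\cdot\min(s,1-s)$ is false in general but the concave chord bound $\phi(s)\ge 2\phi(\tfrac12)\min(s,1-s)$ holds because the chords from $(0,0)$ and $(1,0)$ to $(\tfrac12,\phi(\tfrac12))$ lie below the graph. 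This is the main (and really the only) obstacle, and it is a one-line estimate; everything else is bookkeeping with superadditivity of $t\mapsto t^p$. I would present the case split only if the reader prefers it, but as shown above a single chain of inequalities using $\max(S_A,S_{A^c})^{p-1}\le 1$ suffices, and the dependence of the constant on $n$ is in fact not needed here — the constant can be taken to depend on $p$ alone.
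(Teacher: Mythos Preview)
Your argument is correct and follows essentially the same route as the paper: normalize $\sum z_j=1$, drop the nonnegative ``within-block'' terms to reduce to the two-term inequality $1-S_A^{\,p}-S_{A^c}^{\,p}\gtrsim \Mp(S_A,S_{A^c})$, and then verify this one-variable estimate. The paper dispatches the last step by invoking the local Lipschitz property of $\Mp$ (Lemma~\ref{MpLip}), whereas you give the slightly more explicit chain $1-s^p-(1-s)^p\gtrsim_p \min(s,1-s)\ge \Mp(s,1-s)$ via concavity and the chord bound; your added remark that the constant depends only on $p$ (not on $n$) is correct and worth keeping. One cosmetic point: the identity $\Mp(S_A,S_{A^c})=\min(S_A,S_{A^c})\cdot\max(S_A,S_{A^c})^{p-1}$ is only valid for $p\le 2$, but your actual inequality $\Mp(S_A,S_{A^c})\le \min(S_A,S_{A^c})$ holds for all $p>1$ since $S_A,S_{A^c}\le 1$, so the argument stands; also, despite the opening line, you never actually invoke Lemma~\ref{EnergyBound}, so you may want to drop that reference.
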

\begin{proof}
The proof of this lemma is similar to the proof of the previous lemma. Assume~$\sum z_j = 1$. We estimate the left hand side from below by
\eq{
1-\Big(\sum\limits_{j\in A}z_j\Big)^p -\Big(\sum\limits_{j\notin A}z_j\Big)^p,
}
and reduce the problem to
\eq{
1-z^p - (1-z)^p \gtrsim \Mp(z,1-z),\qquad z\in [0,1].
}
This inequality follows from Lemma~\ref{MpLip} below.
\end{proof}
\begin{Le}\label{MpLip}
The function~$\Mp$ is locally Lipschitz.
\end{Le}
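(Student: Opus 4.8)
The plan is to first put $\Mp$ into a transparent form and then prove a Lipschitz bound on every bounded box, which is all that local Lipschitzness requires. Since $p\le 2$, the exponent $p-2$ is nonpositive, so for $x\ge y>0$ we have $x^{p-2}\le y^{p-2}$, hence $x^{p-1}y\le xy^{p-1}$; together with the symmetric inequality and the trivial degenerate cases this gives
\[
\Mp(x,y)=\min(x,y)\cdot\max(x,y)^{p-1},\qquad x,y\ge 0.
\]
Write $m=\min(x,y)$, $M=\max(x,y)$, and let $m',M'$ be the corresponding quantities for a second point $(x',y')$. Both $\min$ and $\max$ of the two coordinates are $1$-Lipschitz, so $|m-m'|$ and $|M-M'|$ are each bounded by $|x-x'|+|y-y'|$. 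The only genuine subtlety is that $t\mapsto t^{p-1}$ fails to be Lipschitz near $0$ when $p<2$, and the heart of the argument is that the factor $m\le M$ compensates for this.

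Concretely, I would fix $R>0$, take $(x,y),(x',y')\in[0,R]^2$, assume without loss of generality $M\ge M'$ (the bound to be proved is symmetric in the two points), and split
\[
\Mp(x,y)-\Mp(x',y')=(m-m')M^{p-1}+m'\bigl(M^{p-1}-(M')^{p-1}\bigr).
\]
The first term has absolute value at most $R^{p-1}|m-m'|$. For the second term, if $M'=0$ then $m'=0$ and it vanishes; if $M'>0$, then the concavity of $s\mapsto s^{p-1}$ on $(0,\infty)$ (valid since $0<p-1\le 1$) gives $0\le M^{p-1}-(M')^{p-1}\le (p-1)(M')^{p-2}(M-M')$, so, using $m'\le M'$,
\[
0\le m'\bigl(M^{p-1}-(M')^{p-1}\bigr)\le (p-1)(M')^{p-1}(M-M')\le (p-1)R^{p-1}(M-M').
\]
Combining these with the Lipschitz property of $\min$ and $\max$ yields $|\Mp(x,y)-\Mp(x',y')|\le pR^{p-1}(|x-x'|+|y-y'|)$ on $[0,R]^2$. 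Since every point of $\R_+\times\R_+$ lies in such a box, $\Mp$ is locally Lipschitz.

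I do not expect a serious obstacle here: the computation above is essentially the whole proof. The one thing to get right is the cancellation of the singular factor $\max(x,y)^{p-2}$ against $\min(x,y)$ near the origin, which is exactly what the inequality $m'(M')^{p-2}\le (M')^{p-1}$ achieves. An equivalent route, if one prefers to work with derivatives, is to check on the open set $\{x>y>0\}$ that $|\partial_x(x^{p-1}y)|=(p-1)x^{p-2}y\le (p-1)x^{p-1}\le (p-1)R^{p-1}$ and $|\partial_y(x^{p-1}y)|=x^{p-1}\le R^{p-1}$, conclude by symmetry that $\nabla\Mp$ is bounded by a constant depending only on $p,R$ off the measure-zero set $\{x=y\}\cup\{xy=0\}$, and then upgrade this almost-everywhere gradient bound to a genuine Lipschitz bound using the continuity of $\Mp$ on the closed quadrant and convexity of $[0,R]^2$.
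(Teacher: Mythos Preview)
Your argument is correct. The identification $\Mp(x,y)=\min(x,y)\max(x,y)^{p-1}$ is valid for $p\in(1,2]$, the telescoping $mM^{p-1}-m'(M')^{p-1}=(m-m')M^{p-1}+m'(M^{p-1}-(M')^{p-1})$ is the right decomposition, and the key step---using $m'\le M'$ to convert the potentially singular factor $(M')^{p-2}$ into the bounded quantity $(M')^{p-1}$---is exactly the cancellation that makes the lemma work. The resulting bound $|\Mp(x,y)-\Mp(x',y')|\le pR^{p-1}(|x-x'|+|y-y'|)$ on $[0,R]^2$ is clean and sharp in its dependence on $R$.

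The paper does not actually prove this lemma: it simply cites Lemma~4.2 of~\cite{Stolyarov2021bis} and remarks that the domain there is different. Your proof is therefore more self-contained than what the paper offers. The alternative route you sketch at the end (bounding $\nabla\Mp$ a.e.\ and invoking continuity on the convex domain) is also sound, though the direct two-point argument you give first is cleaner and avoids any appeal to a.e.\ differentiability.
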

This lemma is completely similar to Lemma~$4.2$ in~\cite{Stolyarov2021bis} (note that, however, the function~$\Mp$ has different domain in that paper).
\begin{Le}\label{Subadditive}
The function~$\Mp$ is subadditive in the following sense\textup: for any~$p \in (1,2)$ the inequality
\eq{
\Mp\big(\sum_n a_n,b\big) \lesssim \sum_n\Mp(a_n,b),\qquad b, a_n \in \R_+
} 
holds true.
\end{Le}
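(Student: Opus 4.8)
The plan is to fix the second argument $b\in\R_+$ (if $b=0$ both sides vanish) and prove the cleaner, constant-one statement that the one-variable function $g\colon\R_+\to\R_+$ defined by $g(x):=\Mp(x,b)$ is subadditive, i.e.\ $g\bigl(\sum_n a_n\bigr)\le\sum_n g(a_n)$. Unwinding the definition, $g(x)=\min\bigl(b\,x^{p-1},\,b^{p-1}x\bigr)$, which is $b^{p-1}x$ for $0\le x\le b$ and $b\,x^{p-1}$ for $x\ge b$. So the first move is purely this: reduce the claimed inequality, for each fixed $b$, to subadditivity of $g$.

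The key observation is that $g$ is \emph{concave} on $\R_+$. Indeed, $x\mapsto b^{p-1}x$ is linear, and $x\mapsto b\,x^{p-1}$ is concave precisely because $p-1\in(0,1)$ (this is exactly where the hypothesis $p<2$ is used, since $(p-1)(p-2)<0$ makes the second derivative of $x^{p-1}$ negative); hence $g$, being the pointwise minimum of two concave functions, is concave, and $g(0)=0$. A concave function vanishing at the origin is subadditive by the standard averaging argument: setting $A:=\sum_n a_n$ and assuming first $0<A<\infty$, concavity together with $g(0)=0$ gives, for each $n$, $g(a_n)=g\bigl(\tfrac{a_n}{A}\,A+(1-\tfrac{a_n}{A})\cdot 0\bigr)\ge\tfrac{a_n}{A}\,g(A)$, and summing over $n$ yields $\sum_n g(a_n)\ge g(A)=\Mp\bigl(\sum_n a_n,\,b\bigr)$, which is the desired inequality with constant $1$.

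It then remains only to handle the degenerate values of $A$: if $A=0$ both sides vanish, and if $A=\infty$ one applies the finite case to the partial sums $A_N=\sum_{n\le N}a_n$ and lets $N\to\infty$, using that $g$ is continuous, non-decreasing in its first argument, and tends to $\infty$, so that $\sum_n g(a_n)=\infty$ and the inequality holds trivially. I do not anticipate a genuine obstacle here; the only thing needing a moment's care is the conceptual point that one should \emph{not} attempt a case split according to whether each $a_n$ is $\le b$ or $>b$ (that route is messy and loses a constant), but instead exploit the min-of-two-concave-functions structure of $\Mp(\fdot,b)$, which makes the subadditivity immediate and with sharp constant. It is also worth noting in passing that the same reasoning gives concavity of $y\mapsto\Mp(x,y)$, consistent with the way $\Mp$ is used elsewhere in the paper.
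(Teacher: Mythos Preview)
Your proof is correct and in fact yields the inequality with constant~$1$, which is sharper than the paper's~$\lesssim$. The paper proceeds differently: it writes~$\Mp(x,y)=y^{p}\theta(x/y)$ with~$\theta(t)=\min(|t|,|t|^{p-1})$, reduces the claim to~$\theta\bigl(\sum_n t_n\bigr)\lesssim\sum_n\theta(t_n)$, and then appeals to the Lipschitz-type bound~$|\theta(a+b)-\theta(a)|\lesssim\theta(b)$, which is imported from an external reference (Lemma~4.3 in~\cite{Stolyarov2021bis}). Your argument instead observes directly that~$x\mapsto\Mp(x,b)$ is the minimum of two concave functions (using~$p-1\in(0,1)$), hence concave with value~$0$ at the origin, and invokes the standard fact that such functions are subadditive. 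This is more self-contained, avoids the external citation, and gives the sharp constant; the paper's route has the minor advantage of isolating the scalar function~$\theta$, which it reuses elsewhere (e.g.\ in the proof of Lemma~\ref{Convexity}), but your approach is equally compatible with that lemma since the same concavity-of-the-minimum reasoning applies to the other variable.
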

\begin{proof}
We will use the representation
\eq{
\Mp(x,y) = y^p\theta\Big(\frac{x}{y}\Big),\quad x,y\in\R_+,
}
where~$\theta(t) = \min(|t|,|t|^{p-1})$,~$t \in \R$. It suffices to prove the inequality
\eq{
\theta\big(\sum_n t_n\big) \lesssim \sum_n\theta(t_n)
}
for non-negative numbers~$t_n$. It follows from the inequality
\eq{
\big|\theta(a+b) - \theta(a)\big| \lesssim \theta(b)
}
proved in Lemma~$4.3$ in~\cite{Stolyarov2021bis}.
\end{proof}
\begin{Le}\label{Convexity}
Let~$p \in (1,2]$. The function~$y\mapsto \Mp(x,y)$ is concave on the half-line~$\R_+$ for any fixed~$x \in \R_+$.
\end{Le}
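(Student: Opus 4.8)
The plan is to recognize $y\mapsto\Mp(x,y)$ as a pointwise minimum of two concave functions of $y$. First dispose of the trivial cases: if $x=0$ the function is identically zero, and if $p=2$ then $\Mp(x,y)=xy$ is linear (hence concave) in $y$; so fix $x>0$ and $p\in(1,2)$, and write $\Mp(x,\cdot)=\min(g_1,g_2)$ with $g_1(y)=x^{p-1}y$ and $g_2(y)=x\,y^{p-1}$.

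Next, observe that $g_1$ is linear, hence concave, and that $g_2$ is concave on $\R_+$ precisely because the exponent $p-1$ lies in $(0,1)$, so that $y\mapsto y^{p-1}$ is concave there. This is the only place the hypothesis $p\le 2$ is used, and it is essential: for $p>2$ the function $g_2$ becomes convex and the conclusion fails, which is consistent with the remarks in Section~\ref{SD}.

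Finally, invoke the elementary fact that the pointwise minimum (indeed, the infimum of any family) of concave functions is concave: if $g_1,g_2$ are concave and $t\in[0,1]$, then
\[
\min(g_1,g_2)\big(ty_0+(1-t)y_1\big)\ \ge\ \min_{i=1,2}\big(tg_i(y_0)+(1-t)g_i(y_1)\big)\ \ge\ t\,\min(g_1,g_2)(y_0)+(1-t)\,\min(g_1,g_2)(y_1),
\]
where the last inequality uses $g_i(y_j)\ge\min(g_1,g_2)(y_j)$ for each $i,j$. Applying this with $g_1,g_2$ as above yields concavity of $\Mp(x,\cdot)$ immediately.

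There is essentially no obstacle here — the only point requiring attention is that $p\le 2$ is exactly the condition making $y^{p-1}$ concave. If a more hands-on argument is preferred, one may instead locate the breakpoint: since $p<2$, the inequality $x^{p-1}y\le xy^{p-1}$ holds iff $y\le x$, so $\Mp(x,\cdot)$ equals $x^{p-1}y$ on $[0,x]$ and $xy^{p-1}$ on $[x,\infty)$; both pieces are concave, they match at $y=x$, and the one-sided derivatives there satisfy $x^{p-1}=(\text{left slope})\ge(p-1)x^{p-1}=(\text{right slope})$, so the slope is non-increasing and the function is concave.
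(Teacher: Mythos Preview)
Your proof is correct and is essentially the same as the paper's: the paper writes $\Mp(x,y)=x^p\theta(y/x)$ with $\theta(t)=\min(t,t^{p-1})$ and invokes the concavity of $\theta$, which is precisely your observation that $\Mp(x,\cdot)$ is the pointwise minimum of the two concave functions $y\mapsto x^{p-1}y$ and $y\mapsto x\,y^{p-1}$. Your version is just slightly more explicit about why $\theta$ is concave and about the edge cases $x=0$, $p=2$.
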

\begin{proof}
The lemma follows from the representation~$\Mp(x,y) = x^p\theta(y/x)$, where~$\theta(t) = \min(t,t^{p-1})$, and the concavity of~$\theta$.
\end{proof}

\bibliography{mybib}{}
\bibliographystyle{amsplain}

\bigskip

St. Petersburg State University, Department of Mathematics and Computer Science;

%St. Petersburg Department of Steklov Mathematical Institute;

d.m.stolyarov at spbu dot ru.
\end{document}